\title{Linear response for the dynamic Laplacian \\ and finite-time coherent sets}
\author{Fadi Antown\footnote{School of Mathematics and Statistics, University of New South Wales, Sydney NSW 2052, Australia.}, Gary Froyland\footnotemark[1] \ and Oliver Junge\footnote{Department of Mathematics,
Technical University of Munich, 85747 Garching, Germany.}}
\date{\today\\[5mm]
}
\definecolor{winered}{rgb}{0.7,0,0}
\newtheorem{prop}{Proposition}
\newtheorem{theorem}{Theorem}
\newtheorem{lemma}{Lemma}
\newtheorem{remark}{Remark}
\def\R{\mathbb{R}}
\def\N{\mathbb{N}}
\def\cB{\mathcal{B}}
\newcommand{\sym}{\text{\upshape sym}}
\newcommand{\divg}{\mathop{\rm div}}
\newcommand{\spn}{\mathop{\rm span}}
\newcommand{\eps}{\varepsilon}
\begin{document}

\maketitle
\begin{abstract}
Finite-time coherent sets represent minimally mixing objects in general nonlinear dynamics, and are spatially mobile features that are the most predictable in the medium term.  When the dynamical system is subjected to small parameter change, one can ask about the rate of change of (i) the location and shape of the coherent sets, and (ii) the mixing properties (how much more or less mixing), with respect to the parameter. We answer these questions by developing linear response theory for the eigenfunctions of the dynamic Laplace operator, from which one readily obtains the linear response of the corresponding coherent sets. We construct efficient numerical methods based on a recent finite-element approach and provide numerical examples.
\end{abstract}
\section{Introduction}

Finite-time coherent sets \cite{froyland2010transport,froyland2013analytic,froyland2015dynamic} are regions in the compact phase space of a nonlinear dynamical system that minimally mix over a finite time duration, and therefore play an important role in the analysis of how material objects are transported in fluids.
Spectral methods for identifying finite-time coherent sets were developed in \cite{froyland2010transport,froyland2013analytic} directly from transfer operators, and later in \cite{froyland2015dynamic,froyland2017dynamic} using the dynamic Laplacian, which was derived as a zero-diffusion limit of the transfer operator construction in \cite{froyland2013analytic}.
Various implementations of these two approaches and related methods may be found in \cite{mabollt13,froylandpadberg15,froylandjunge15,williams15,denner16,hadjighasem16, banischkoltai17,karrasch17,FrJu18,fackeldey19,FKS20}.
In the present paper we use the approach of \cite{froyland2015dynamic,froyland2017dynamic}, which defines finite-time coherent sets through the notion of dynamic isoperimetry:  those sets whose boundary size relative to volume remains small under the finite-time dynamics.
These persistently small boundaries represent evolving fluid interfaces across which there is minimal mixing.
The key technology for finding these coherent sets is the dynamic Laplace operator, defined in \cite{froyland2015dynamic};  the leading eigenfunctions of this operator encode the finite-time coherent sets.
The dynamic Laplacian and its eigenfunctions may be efficiently approximated using a specialised finite element method \cite{FrJu18}, and individual coherent sets may be automatically separated using algorithms such as SEBA \cite{froyland2019sparse}.

Throughout, we will represent the finite-time dynamics by a single application of a transformation $T_0:\Omega\to T_0(\Omega)$, $\Omega\subset\R^n$ compact;  $T_0$ may arise, for example, as a flow map of some nonautonomous ordinary differential equation.
The question we investigate in this work is \emph{how coherent sets behave under perturbation of  the dynamics}.
For some $\eps_0>0$ we consider a family of maps $T_\eps$, $\eps\in(-\eps_0,\eps_0)$, where we think of $T_0$ as governing the original, unperturbed dynamics.
As $\eps$ is varied from zero, the dynamic Laplacian corresponding to $T_\eps$, its eigenfunctions, and the corresponding coherent sets, all vary from those objects computed with $T_0$.
Given sufficient regularity of $\eps\mapsto T_\eps$ we may hope for some regular dependence of coherent sets on $\eps$.
The notion of coherent sets has found application in fluid flows from the laboratory scale to the planetary scale, and the dynamic Laplacian has proven to be an efficient way of extracting coherent objects (such as the Gulf Stream and ocean eddies \cite{froyland2019sparse}).
In the context of system perturbations due to climate change, an important step in quantifying potential impacts would be the prediction of responses of coherent geophysical features.

The approach we take is based on the ideas of linear response \cite{ruellemap,baladiicm} in dynamical systems.
Linear response is classically concerned with the derivatives of physical invariant measures $\mu_\eps$ of autonomous maps $T_\eps$ with respect to the parameter $\eps$.
The physical invariant measure $\mu_\eps$ is the leading eigenfunction (or eigendistribution) of the transfer operator $\mathcal{L}_\eps$ for $T_\eps$, and formulae for $\frac{\partial}{\partial\eps}\mu_\eps$ involve $\frac{\partial}{\partial\eps}\mathcal{L}_\eps$.
In order for physical invariant measures to exist, usually some hyperbolicity of the dynamics is required. For Anosov maps (and more general dynamical systems, like Axiom A), the
differentiability (which include the linear response) of the eigendata of the transfer operator
associated to these dynamical systems have been obtained \cite{gouezel06,gouezel08}; linear response results are also available for uniformly hyperbolic flows \cite{butterley,ruelleflow}.
Aside from smooth dynamics, linear response has also been treated for unimodal maps \cite{baladismania} and intermittent maps \cite{baladitodd};
there are also results for the existence of linear response for
stochastic dynamical systems
\cite{hairermajda,galatolo2019linear,bahsoun2020linear}.
Linear response is not guaranteed; see e.g.\ \cite{baladiicm} for details on failure of linear response.

In the present paper, we wish to follow this \emph{general} notion of linear response, namely computing the derivative of a quantity with respect to a parameter.
Specifically, we replace a family of transfer operators $\mathcal{L}_\eps$ with a family of dynamic Laplace operators arising from a family of diffeomorphisms $T_\eps$.
This new linear response problem requires a very different functional analytic setup and has rather well-behaved responses to perturbations.
We prove that differentiability of $\eps\mapsto T_\eps$ implies differentiability of the dynamic Laplacian.
Further, if a particular eigenvalue is algebraically simple, this eigenvalue and the associated eigenfunction have a linear response (are differentiable with respect to $\eps$).
We obtain a formula for the derivative of the eigenvalues with respect to the parameter;  the derivative of the second eigenvalue quantifies the instantaneous rate of change of global mixing as the parameter is varied.
We then derive a formula for the linear response of the eigenfunctions;
the derivatives of the dominant eigenfunctions of the dynamic Laplacian immediately yield derivatives of the corresponding finite-time coherent sets.
Building on the finite-element method (FEM) based approaches in \cite{FrJu18} we develop
numerical schemes for numerically computing these linear responses, and illustrate these schemes on the standard map and the Meiss-Mosovsky map.
In addition to computing the response of coherent sets, we observe that our first-order approximations of the perturbed eigenvectors, computed using linear response, produce coherent sets that are rather close to the true coherent sets, even for large extrapolation values.

An outline of the paper is as follows: in Section \ref{sec:intro-pert-setup} we introduce differentiability hypotheses on the dynamics.
In Section
\ref{sec:dynamic_isoperimetry} we define the dynamic Laplacian,
coherent sets, and our linear response problem.
Section \ref{sec:exi-lin-resp} contains the
proof of the weak differentiability of the dynamic Laplacian with respect to the perturbing parameter, and the proof of the existence of linear response of eigenvectors.
In Section \ref{section-theo-comp-u'} we derive a linear system whose solution provides the linear response, and in
Section \ref{section-numerical-computation} we develop two FEM-based
approaches to numerically solve this linear system and estimate the linear responses.
We conclude in Section \ref{sec:experiments}
with numerical demonstrations of the theory.

\section{Perturbations}\label{sec:intro-pert-setup}

Let $\Omega\subset\R^n$ be a compact, connected domain with smooth boundary.  We consider a family of maps $T_\eps:\Omega\rightarrow T_\eps(\Omega)$, $\eps\in (-\eps_0,\eps_0)$, where $T_0$ represents the original, unperturbed dynamics. For simplicity, we assume that $T_\eps$ is volume-preserving, and consider a single application of $T_\eps$. The methods we propose are easily extendable to non-volume-preserving $T_\eps$, curved manifolds \cite{froyland2017dynamic}, and multiple applications of $T_\eps$ (see \cite{froyland2015dynamic}).

Special families we have in mind are:
\begin{enumerate}
\item $T_\eps$ is given by the flow map $\varphi_\eps^{t_0,t_1}$ of some ordinary differential equation
\begin{equation*}
\partial_t x =v(t,x,\eps),
\end{equation*}
where the vector field $v$ depends on a parameter $\eps$ and $t_0,t_1\in\R$ are chosen such that the flow map is defined for all $x$.  Under appropriate assumptions on $v$ we have $T_\eps = T_0 + \eps \dot{T}_0 + o(\eps)$, where $\dot{T}_0(x) := \partial_\eps\varphi_\eps^{t_0,t_1}(x)|_{\eps=0}$ and $\partial_\eps\varphi_\eps^{t_0,t}(x)|_{\eps=0}$ satisfies the variational equation
\begin{equation}\label{variational_Tdot}
\partial_t\partial_\eps\varphi_\eps^{t_0,t}(x)|_{\eps=0} = \partial_x v(t,T_0(x),0)\partial_\eps\varphi_\eps^{t_0,t}(x)|_{\eps=0} + \partial_\eps v(t,T_0(x),0).
\end{equation}

\item As a further specialisation of 1.\ we interpret the time $t$ itself as the parameter $\eps$, i.e.\ we consider
\[
\partial_t x = v(x,t)
\]
with the flow map $T_0=\varphi^{t_0,t_1}$. In this case we have that
\begin{eqnarray*}
T_\eps (x)
:=  \varphi^{t_0,t_1+\eps}(x)
=  \varphi^{t_0,t_1}(x)+\eps\partial_{t}\varphi^{t_0,t}|_{t=t_1}(x)+ o(\eps)
=  T_0(x) + \eps \dot{T}_0(x) + o(\eps),
\end{eqnarray*}
where $\dot{T}_0(\cdot) := {\partial_{t}}\varphi^{t_0,t}|_{t=t_1}(\cdot) = v(\cdot,t_1)$.
\end{enumerate}

The precise setting we consider is the following: Let Diff$^2(\Omega,\R^n)$ be the
space of $C^2$-diffeo\-morphisms from $\Omega$ to $\R^n$ which is endowed with the
$C^2$-norm
$$
\|f\|_{C^2(\Omega,\R^n)}
= \sum_{\alpha_j,|\alpha|\le 2}\max_{x\in\Omega}\bigg\|\frac{\partial^{|\alpha|}f}{\partial x^{\alpha_1}_{1}\dots\partial x^{\alpha_n}_{n}}(x)\bigg\|,
$$
where $\alpha = (\alpha_1,\dots,\alpha_n)\in \N_0^n, |\alpha| =\sum_{j=1}^n\alpha_j$.
We  assume that the map $\eps\mapsto T_\eps$ is $C^1$ from $(-\eps_0,\eps_0)\subset\R$ to $\text{Diff}^2(\Omega,\R^n)$. From Taylor's theorem (see \cite{lang2012real}, XIII \S 6)
for sufficiently small $\eps_0>0$, one has
\begin{equation}\label{eq:Taylor-Dynamics}
T_\eps = T_0+\eps \dot{T}_0 + R_\eps
\end{equation}
for  $\eps\in (-\eps_0,\eps_0)$, where $\dot{T}_0, R_\eps\in \text{Diff}^2(\Omega,\R^n)$, and $\|R_\eps\|_{C^2(\Omega,\R^n)}=o(\eps)$. Since all $T_\eps$ are $C^2$ diffeomorphisms we have that for any
$\eps\in (-\eps_0,\eps_0)$ the maps $DT_\eps$ and $DT^{-1}_\eps$ are in $ C^1(\Omega, \mathcal{B}(\R^n))$, where $\mathcal{B}(X)$ is the space of bounded linear maps from some Banach space $X$ into itself.

\section{The dynamic Laplacian}
\label{sec:dynamic_isoperimetry}

We are ultimately interested in analysing the response of coherent sets to perturbations of the dynamics.  As coherent sets can be detected via level sets of leading nontrivial eigenfunctions of the dynamic Laplace operator, we need to understand how these eigenfunctions respond to perturbations in the dynamics, i.e.\ how they change with $\eps$.

Following \cite{froyland2015dynamic}, when dividing a manifold $\Omega$ into two coherent sets, one seeks a dynamically minimal interface $\Gamma$ disconnecting $\Omega$;  the interface $\Gamma$ forms the shared boundary of the two coherent sets.
More precisely, if $\Gamma\subset\Omega$ is a codimension-1 submanifold disconnecting $\Omega$ into $\Omega_1$ and $\Omega_2$, we compute the dynamic Cheeger value of $\Gamma$:
\begin{equation}
\label{cheeger}
\mathbf{h}(\Gamma) := \frac{\frac{1}{2} (\ell_{n-1}(\Gamma)+\ell_{n-1}(T_\eps(\Gamma)))}{\min\{\ell(\Omega_1),\ell(\Omega_2)\}},
\end{equation}
where $\ell_{n-1}$ is the induced $n-1$-dimensional volume and $\ell$ is the $n-$dimensional volume. We
seek the minimising $\Gamma$ to obtain the dynamic Cheeger constant \cite{froyland2015dynamic}:
\begin{equation}
\mathbf{h}:= \min\{\mathbf{h}(\Gamma):\Gamma\text{ is a }C^\infty\text{ codimension 1}\text{ manifold disconnecting }\Omega\}.
\end{equation}
In the case where we do not wish the interface $\Gamma$ to intersect the boundary of $\Omega$ we can alternatively consider a Dirichlet dynamic Cheeger constant;  see \S2.2 \cite{FrJu18}.
These two options are summarised in \cite[Figures 2 and 3]{FrJu18}, respectively.

A minimizing $\Gamma$ can be linked to level sets of eigenfunctions of a \emph{dynamic Laplace operator}, see \cite{froyland2015dynamic,froyland2017dynamic,FrJu18}. Denote the pushforward resp.\ pullback of a function $f:\Omega\to\mathbb{R}$ by $T_{\eps,*}f := f\circ T_\eps^{-1}$ resp.\ $T_\eps^*f := f\circ T_\eps$ and let  $\Delta_\Omega$ be the Laplace operator on $\Omega$.
The dynamic Laplace operator \cite{froyland2015dynamic} is
\begin{equation}
\label{DLdef}
\Delta^D_\eps := \frac{1}{2}\left(\Delta_\Omega + T_\eps^*\Delta_{T_\eps(\Omega)} T_{\eps,*}\right).
\end{equation}
Define the matrix-valued function $A_\eps\in C^1(\Omega, \mathcal{B}(\R^n))$ by
\begin{equation}
\label{coeffmatrix}
A_\eps:= \frac{1}{2}\left(I+(DT_\eps^\top DT_\eps)^{-1}\right).
\end{equation}
We are interested in the eigenproblem
\begin{equation}\label{eq:eign-prob-coh-set-1}
\Delta_\eps^D u_\eps = \lambda_\eps u_\eps\qquad\text{ on int}(\Omega),
\end{equation}
with homogeneous Neumann (resp.\ Dirichlet) boundary conditions
\begin{equation}\label{eq:eign-prob-coh-set-2}
\nabla u_\eps\bullet A_\eps\nu = 0\qquad(\text{resp.}\ u_\eps=0) \qquad\text{ on }\partial\Omega
\end{equation}
($\nu$ denotes the outer normal on $\Omega$). The spectral properties of the family $\Delta^D_\eps$ are given by Theorem 4.1 \cite{froyland2015dynamic}.
A discussion of the interpretation of the (natural) Neumann boundary conditions is given immediately after Theorem 3.2 \cite{froyland2015dynamic};  the Dirichlet boundary condition case is developed in \cite{FrJu18}.
Throughout the paper, we will assume that all eigenvalues of $\Delta^D_\eps$ are algebraically simple.

The weak form of the eigenvalue problem \eqref{eq:eign-prob-coh-set-1}--\eqref{eq:eign-prob-coh-set-2} is given by
\begin{equation}
\label{Eigen-Prob-Weak-form}
-\frac{1}{2}\left(\int_{\Omega}  \nabla  u_\eps\bullet\nabla \varphi \; d\ell + \int_{T_\eps(\Omega)}  \nabla (T_{\eps,*} u_\eps)\bullet\nabla (T_{\eps,*}\varphi) \; d\ell\right) = \lambda_\eps\int_\Omega u_\eps\varphi \; d\ell \qquad\forall \varphi\in H,
\end{equation}
where $H$ denotes $H^1(\Omega)$ in the case of Neumann and $H^1_0(\Omega)$ in the case of homogenous Dirichlet boundary conditions.  Note that if we let $\varphi = u_\eps$ in \eqref{Eigen-Prob-Weak-form}, all integrals are positive; thus, the eigenvalues $\lambda_\eps$ are negative (or $0$). Note further that
\begin{equation*}
\begin{aligned}
-\int_{T_\eps(\Omega)}  \nabla (T_{\eps,*}u_\eps)\bullet\nabla (T_{\eps,*}\varphi)\; d\ell
&= -\int_\Omega  (DT_\eps^\top DT_\eps)^{-1}\nabla u_\eps\bullet \nabla \varphi\; d\ell,
\end{aligned}
\end{equation*}
so that (\ref{Eigen-Prob-Weak-form}) can be written as
\begin{equation}
\label{Eigen-Prob-Weak-form-2}
- a_\eps(u_\eps,\varphi) = \lambda_\eps \langle u_\eps,\varphi \rangle \qquad\forall \varphi\in H.
\end{equation}
with the bilinear form $a_\eps(u_\eps,\varphi) = \int_{\Omega} A_\eps \nabla  u_\eps\bullet\nabla \varphi \; d\ell$, and $\langle\cdot,\cdot\rangle$ the $L^2$ scalar product on $\Omega$.
By the above considerations we may also write $\Delta^D_\eps=\sum_{i,j=1}^n \partial_jA_{\eps,ij}\partial_i$.

\section{Existence of a Linear Response}\label{sec:exi-lin-resp}

Throughout we assume that $u_\eps$ is scaled so that $\|u_\eps\|=1$, where $\|\cdot\|$ is the $L^2(\Omega,\ell)$ norm.
In order to answer the question of how coherent sets of $T_\eps$ depend on $\eps$, we are going to show that the map $\eps\mapsto u_\eps$ is differentiable at $0$ as a map from $(-\eps_0,\eps_0)$ to
$H$ and devise a method for computing the \textit{linear response}
\[
\dot{u}_0 := \frac{d}{d\eps} u_\eps|_{\eps=0}.
\]

We begin with a lemma about the regularity of the coefficient function $A_\eps$ of the dynamic Laplace operator.
In Proposition \ref{thm:deriv-dyna-Lap} we show that we can differentiate (in a weak sense) the map $\eps\mapsto\Delta^D_\eps$.
Finally, we apply a general regularity theorem for the spectral
data of elliptic operators to obtain the differentiability of the maps $\eps\mapsto\lambda_\eps$ and $\eps\mapsto u_\eps$.

Let $Q^\sym = \frac12(Q+Q^\top)$ denote the symmetric part of a matrix $Q$.
\begin{lemma}\label{lem:deriv-coeif-para-Lap}
The matrix-valued function $\dot{A}_0\in C^1(\Omega,\cB(\R^n))$ given by \begin{equation}\label{eq:derv-mat-coef}
\dot{A}_0 = -\left((DT_0)^{-1}(D\dot{T}_0) (DT_0)^{-1}(DT_0)^{-\top}\right)^\sym
\end{equation}
satisfies
\begin{equation}
\lim_{\eps\rightarrow 0}\bigg\|\frac{A_\eps - A_0}{\eps} -\dot{A}_0\bigg\|_{C^1(\Omega,\cB(\R^n))} = 0.
\end{equation}
\end{lemma}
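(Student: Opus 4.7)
The plan is to realise $A_\eps$ as a composition of smooth Banach-space operations applied to $DT_\eps$, then chain-rule through. Concretely, $A_\eps = \tfrac12(I+\Psi(DT_\eps))$ where $\Psi(M):=(M^\top M)^{-1}$, and I would treat everything inside the Banach algebra $\mathbf{X}:=C^1(\Omega,\mathcal{B}(\R^n))$, which is closed under pointwise matrix operations and satisfies a submultiplicative product inequality because $\Omega$ is compact.

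The first ingredient is that $\eps\mapsto DT_\eps$ is differentiable at $0$ from $(-\eps_0,\eps_0)$ into $\mathbf{X}$, with derivative $D\dot T_0$. This follows by applying the bounded linear map $f\mapsto Df$ (from $C^2(\Omega,\R^n)$ to $\mathbf{X}$) to the Taylor expansion \eqref{eq:Taylor-Dynamics}, giving
\[
DT_\eps = DT_0 + \eps\, D\dot T_0 + DR_\eps, \qquad \|DR_\eps\|_{C^1} \le \|R_\eps\|_{C^2} = o(\eps).
\]
Since transposition is bounded linear on $\mathbf{X}$ and multiplication is bounded bilinear on $\mathbf{X}$, the map $\eps\mapsto B_\eps := DT_\eps^\top DT_\eps$ is differentiable into $\mathbf{X}$ at $\eps=0$ with
\[
\dot B_0 = (D\dot T_0)^\top DT_0 + DT_0^\top (D\dot T_0).
\]

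The main technical point is the smoothness of matrix inversion on $\mathbf{X}$. Because $T_0$ is a $C^2$-diffeomorphism of the compact $\Omega$, $DT_0(x)$ and hence $B_0(x)$ are invertible uniformly in $x$, so $B_0$ lies in the open set $U\subset\mathbf{X}$ of pointwise-invertible elements. On $U$ the inversion $B\mapsto B^{-1}$ is Fr\'echet smooth with derivative $H\mapsto -B^{-1}HB^{-1}$, as can be shown by a standard Neumann-type expansion in the Banach algebra $\mathbf{X}$. By continuity, $B_\eps\in U$ for $|\eps|$ small, so the chain rule gives $\tfrac{d}{d\eps}|_{\eps=0} B_\eps^{-1} = -B_0^{-1}\dot B_0\, B_0^{-1}$ in $\mathbf{X}$, whence $\dot A_0 = -\tfrac12 B_0^{-1}\dot B_0\, B_0^{-1}$. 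Substituting $\dot B_0$, using $B_0^{-1}=(DT_0)^{-1}(DT_0)^{-\top}$, and noting that the two resulting terms are transposes of each other, collapses the expression to the symmetrised form \eqref{eq:derv-mat-coef}. Everything outside the inversion step reduces to the elementary calculus of $C^1$ Banach-algebra operations; that step is routine once the Banach-algebra structure of $\mathbf{X}$ is in hand, so I expect no serious obstacle.
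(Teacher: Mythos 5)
Your argument is correct and arrives at the right formula: with $\dot B_0 = (D\dot T_0)^\top DT_0 + DT_0^\top D\dot T_0$ and $B_0^{-1} = (DT_0)^{-1}(DT_0)^{-\top}$, the two terms of $-\tfrac12 B_0^{-1}\dot B_0 B_0^{-1}$ are indeed $M$ and $M^\top$ with $M = (DT_0)^{-1}(D\dot T_0)(DT_0)^{-1}(DT_0)^{-\top}$, which collapses to \eqref{eq:derv-mat-coef}. The route differs from the paper's in its decomposition: the paper never forms $B_\eps = DT_\eps^\top DT_\eps$; instead it writes $(DT_\eps)^{-1} = (\mathrm{Id} + (DT_0)^{-1}(\eps D\dot T_0 + DR_\eps))^{-1}(DT_0)^{-1}$, expands the first factor as an explicit Neumann series in $C^1(\Omega,\mathcal{B}(\R^n))$ once $\eps$ is small enough that the perturbation has $C^1$-norm less than $1$, bounds the remainder $\widehat R_\eps$ by summing the geometric tail to get $\|\widehat R_\eps\|_{C^1} = o(\eps)$, and only then multiplies the expansion by its transpose to conclude $\|A_\eps - A_0 - \eps\dot A_0\|_{C^1} = o(\eps)$. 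So the underlying engine is identical --- a Neumann series for inversion in the Banach algebra $C^1(\Omega,\mathcal{B}(\R^n))$, fed by the Taylor expansion \eqref{eq:Taylor-Dynamics} --- but you package it as the chain rule through the abstractly smooth inversion map $B\mapsto B^{-1}$ on the open set of invertible elements, whereas the paper carries out the expansion and the $o(\eps)$ bookkeeping by hand. Your version is more modular and would generalize immediately to higher-order derivatives of $\eps\mapsto A_\eps$; its cost is that you must justify (or cite) that $C^1(\Omega,\mathcal{B}(\R^n))$ with the stated norm is a unital Banach algebra (submultiplicativity follows from the product rule) and that pointwise invertibility of a $C^1$ matrix field on the compact set $\Omega$ coincides with invertibility in that algebra --- both true and routine, but these are precisely the facts the paper's explicit estimates establish in situ.
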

\begin{proof}

We recall from \eqref{eq:Taylor-Dynamics} that for sufficiently small $\eps_0>0$,
we have that
$T_\eps = T_0+\eps \dot{T}_0 + R_\eps$ for $\eps\in(-\eps_0,\eps_0)$ with $\|R_\eps\|_{C^2(\Omega,\R^n)}=o(\eps)$, yielding
\begin{eqnarray}
   \label{inverse} (DT_\eps)^{-1} &=& \left(DT_0 + \eps D\dot{T}_0 + DR_\eps\right)^{-1}
= \left(\text{Id}+(DT_0)^{-1}(\eps D\dot{T}_0 +DR_\eps)\right)^{-1}(DT_0)^{-1}.
\end{eqnarray}
Using the fact that $\|R_\eps\|_{C^2(\Omega,\R^n)}=o(\eps)$, we have that
$\|DR_\eps\|_{C^1(\Omega,\mathcal{B}(\R^n))} = o(\eps)$ and so there exists
$C<\infty$, that is independent of $\eps$, such that
\begin{align*}
    &\|(DT_0)^{-1}(\eps D\dot{T}_0+DR_\eps)\|_{C^1(\Omega,\mathcal{B}(\R^n))}\\
&\le \|(DT_0)^{-1}\|_{C^1(\Omega,\mathcal{B}(\R^n))} \left(|\eps|\|D\dot{T}_0\|%
_{C^1(\Omega,\mathcal{B}(\R^n))} + \|DR_\eps\|_{C^1(\Omega,%
\mathcal{B}(\R^n))}\right)\\
&\le|\eps| \|(DT_0)^{-1}\|_{C^1(\Omega,\mathcal{B}(\R^n))} \left(\|D\dot{T}_0\|%
_{C^1(\Omega,\mathcal{B}(\R^n))} + C\right).
\end{align*}
Choosing $\eps$ small enough to satisfy
$$|\eps|< \left(\|(DT_0)^{-1}\|_{C^1(\Omega,\mathcal{B}(\R^n))} \left(\|D\dot{T}_0\|%
_{C^1(\Omega,\mathcal{B}(\R^n))} + C\right)\right)^{-1}$$
we get
$\|(DT_0)^{-1}(\eps D\dot{T}_0+DR_\eps)\|_{C^1(\Omega,\mathcal{B}(\R^n))} <1$.
We can now use the Neumann series representation for the RHS of (\ref{inverse}) to obtain
\begin{eqnarray}
  \nonumber  (DT_\eps)^{-1}&=& \left(\text{Id}-(DT_0)^{-1}(\eps D\dot{T}_0 +DR_\eps)+\left((DT_0)^{-1}(\eps D\dot{T}_0 +DR_\eps)\right)^2 - \cdots \right)(DT_0)^{-1}\\
\nonumber &=& \left(\text{Id} - \eps (DT_0)^{-1}D\dot{T}_0 + \widehat{R}_\eps\right)(DT_0)^{-1}\\
\label{invexp}&=& (DT_0)^{-1} -\eps (DT_0)^{-1}(D\dot{T}_0)(DT_0)^{-1} + \widehat{R}_\eps(DT_0)^{-1},
\end{eqnarray}
where $\widehat{R}_\eps = (DT_0)^{-1}DR_\eps + \sum_{i\ge 2} (-1)^i%
\left((DT_0)^{-1}(\eps D\dot{T}_0 +DR_\eps)\right)^i$. Noting that
\begin{align*}
    \| \widehat{R}_\eps\|_{C^1(\Omega,\mathcal{B}(\R^n))}&\le \|(DT_0)^{-1}\|_{C^1(\Omega,\mathcal{B}(\R^n))} \|DR_\eps\|%
_{C^1(\Omega,\mathcal{B}(\R^n))}\\
&+ \sum_{i\ge 2}%
\|(DT_0)^{-1}\|_{C^1(\Omega,\mathcal{B}(\R^n))}^i%
\left(\eps\|D\dot{T}_0\|_{C^1(\Omega,\mathcal{B}(\R^n))}%
+\|DR_\eps\|_{C^1(\Omega,\mathcal{B}(\R^n))}\right)^i,
\end{align*}
and using the fact that $\|DR_\eps\|_{C^1(\Omega,\mathcal{B}(\R^n))} = o(\eps)$, we have
that $\| \widehat{R}_\eps\|_{C^1(\Omega,\mathcal{B}(\R^n))} = o(\eps)$.
Hence, using (\ref{invexp}) we get
\begin{align*}
    &(DT_\eps)^{-1}(DT_\eps)^{-\top} \\
&=(DT_0)^{-1}(DT_0)^{-\top}\\
&\quad-\eps\left((DT_0)^{-1}(D\dot{T}_0)(DT_0)^{-1}(DT_0)^{-\top} + (DT_0)^{-1}(DT_0)^{-\top}(D\dot{T}_0)^\top(DT_0)^{-\top} \right) +\widetilde{R}_\eps\\
&= (DT_0)^{-1}(DT_0)^{-\top} +2 \eps \dot{A}_0 + \widetilde{R}_\eps,
\end{align*}
where $\|\widetilde{R}_\eps\|_{C^1(\Omega,\mathcal{B}(\R^n))} = o(\eps)$.
We conclude that $\|A_\eps- A_0 -\eps \dot{A}_0\|_{C^1(\Omega,\mathcal{B}(\R^n))} = o(\eps)$.
\end{proof}

We define $\dot\Delta_0^D:=\divg(\dot A_0\nabla)$ and consider the associated bilinear form
\begin{equation}
\label{eq:dotbilinform}
\dot a_0(\psi,\varphi) := \int_\Omega \dot A_0 \nabla\psi\bullet\nabla\varphi \;d\ell.
\end{equation}

\begin{prop}\label{thm:deriv-dyna-Lap}
The bilinear form (\ref{eq:dotbilinform}) is a weak derivative of the weak form of $\Delta_\eps^D$ at $\eps=0$ in the sense that for $\psi,\varphi\in H$
\begin{equation}
  \dot a_0(\psi,\varphi) = \lim_{\eps\rightarrow 0}  \frac{a_\eps(\psi,\varphi) - a_0(\psi,\varphi)}{\eps}.
\end{equation}
\end{prop}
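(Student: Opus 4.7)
The plan is to reduce the claim directly to the convergence already established in Lemma \ref{lem:deriv-coeif-para-Lap}. The key observation is that $a_\eps$, $a_0$, and $\dot a_0$ all have the same structural form: an integral over $\Omega$ of a matrix coefficient applied to $\nabla\psi\bullet\nabla\varphi$. So linearity should collapse the quotient into a single integral with a matrix coefficient that we already know converges to zero uniformly.

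First, using linearity of the integral in the matrix coefficient, I would write
\begin{equation*}
\frac{a_\eps(\psi,\varphi) - a_0(\psi,\varphi)}{\eps} - \dot a_0(\psi,\varphi)
= \int_\Omega \left(\frac{A_\eps - A_0}{\eps} - \dot A_0\right)\nabla\psi \bullet \nabla\varphi \; d\ell.
\end{equation*}
Next, I would estimate this integral pointwise. At each $x\in\Omega$ the integrand is bounded, via the operator norm on $\cB(\R^n)$, by $\|(A_\eps(x)-A_0(x))/\eps-\dot A_0(x)\|\,|\nabla\psi(x)|\,|\nabla\varphi(x)|$. Taking the supremum over $x$ on the coefficient factor and applying the Cauchy--Schwarz inequality to the remaining $L^2$-product yields
\begin{equation*}
\left|\frac{a_\eps(\psi,\varphi) - a_0(\psi,\varphi)}{\eps} - \dot a_0(\psi,\varphi)\right|
\le \left\|\frac{A_\eps - A_0}{\eps} - \dot A_0\right\|_{C^0(\Omega,\cB(\R^n))} \|\nabla\psi\|_{L^2}\,\|\nabla\varphi\|_{L^2}.
\end{equation*}

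Finally, since $\psi,\varphi\in H\subseteq H^1(\Omega)$, the gradient norms $\|\nabla\psi\|_{L^2}$ and $\|\nabla\varphi\|_{L^2}$ are finite and independent of $\eps$. The $C^0$-norm on $\Omega$ is dominated by the $C^1$-norm, so Lemma \ref{lem:deriv-coeif-para-Lap} gives
\begin{equation*}
\left\|\frac{A_\eps - A_0}{\eps} - \dot A_0\right\|_{C^0(\Omega,\cB(\R^n))}
\le \left\|\frac{A_\eps - A_0}{\eps} - \dot A_0\right\|_{C^1(\Omega,\cB(\R^n))} \xrightarrow[\eps\to 0]{} 0,
\end{equation*}
and sending $\eps\to 0$ in the previous display finishes the proof.

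There is essentially no obstacle: all of the analytic work is in Lemma \ref{lem:deriv-coeif-para-Lap}, and the present proposition is a clean functional-analytic packaging of that convergence at the level of bilinear forms on $H$. The only minor point to get right is that the bound on $\nabla\psi,\nabla\varphi$ in $L^2(\Omega)$ is enough; one does not need to control any higher derivatives of the test functions because the estimate pushes all $\eps$-dependence onto the coefficient factor.
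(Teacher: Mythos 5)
Your argument is correct and is essentially the paper's own proof: both reduce the claim to Lemma \ref{lem:deriv-coeif-para-Lap} by pulling the difference of coefficient matrices inside a single integral, bounding it by the $C^0$-norm of the coefficient remainder times $\|\nabla\psi\|\,\|\nabla\varphi\|$ via Cauchy--Schwarz. The only cosmetic difference is that you divide by $\eps$ before estimating while the paper estimates $a_\eps - a_0 - \eps\dot a_0$ directly as $o(\eps)$.
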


\begin{proof}
We have
\[
a_\eps(\psi,\varphi) - a_0(\psi,\varphi) - \eps\dot a_0(\psi,\varphi) = \int_\Omega (A_\eps - A_0 -\eps\dot A_0)\nabla\psi\bullet\nabla\varphi\; d\ell.
\]
Lemma \ref{lem:deriv-coeif-para-Lap} yields $A_\eps = A_0 + \eps \dot{A}_0 + R_\eps$ with $\|R_\eps\|_{C^1(\Omega,\cB(\R^n))} = o(\eps)$. We therefore immediately get
\begin{equation}\label{eq:linear-order-weak-form-bound}
\begin{aligned}
\bigg|\int_\Omega (A_\eps - A_0 - \eps \dot{A}_0)\nabla \psi\bullet \nabla\varphi \;d\ell \bigg|
&= \bigg|\int_\Omega R_\eps\nabla \psi\bullet \nabla\varphi \;d\ell \bigg|\\
&\le \|R_\eps\|_{C^0(\Omega,\cB(\R^n))}\|\nabla \psi\|\|\nabla\varphi\| = o(\eps).
\end{aligned}
\end{equation}
\end{proof}

We now state a theorem concerning differentiability of the spectral data for the eigenproblem:
\begin{equation}\label{D-Eig-Prob}
\begin{aligned}
L_A u & = \lambda u 	\quad \text{in } \Omega, \\
    u & = 0 			\quad \text{on }\partial \Omega
\end{aligned}
\end{equation}
of some general uniformly elliptic second order differential operator $L_A = \sum_{i,j=1}^n \partial_j A_{ij}\partial_i$ with coefficients $A=(A_{ij})$. Let $\Lambda(L_A)\subset\mathbb{R}\times H$ be the set of eigenpairs $(\lambda, u)$ of $L_A$.

\begin{theorem}[\cite{haddad2015differentiability}]\label{Theorem-Had-Mon}
Let $O\subset\mathbb{R}^n$ be a bounded domain and $A_0\in C^{k}(\overline O,\R)^{n^2}$, $k\geq 1$,  the coefficients of the uniformly elliptic operator $L_{A_0}$. Let $(\lambda_0,u_0)\in\Lambda (L_{A_0})$ and assume $\lambda_0$ is algebraically simple. Then there exists a neighbourhood $\mathcal{U}\subset C^{k}(\overline O,\R)^{n^2}$ of $A_0$ and $C^{k}$-functions $\boldsymbol\lambda :\mathcal{U}\rightarrow\mathbb{R}$ and $\mathbf{u}:\mathcal{U}\rightarrow H_0^1(O)$ such that:
\begin{enumerate}
\item $\boldsymbol\lambda(A_0) = \lambda_0$ and $\mathbf{u}(A_0)=u_0$;
\item $(\boldsymbol\lambda(A),\mathbf{u}(A))\in \Lambda(L_{A})$ for every $A\in \mathcal{U}$.
\end{enumerate}
\end{theorem}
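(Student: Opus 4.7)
The plan is to prove this via the implicit function theorem applied to a suitably augmented eigenvalue map. Define
\[
F:\mathcal{U}\times\R\times H^{1}_{0}(O)\longrightarrow H^{-1}(O)\times\R,\qquad F(A,\lambda,u):=\bigl(L_{A}u-\lambda u,\ \langle u,u_{0}\rangle-\langle u_{0},u_{0}\rangle\bigr),
\]
so that $F(A_{0},\lambda_{0},u_{0})=0$. The second slot is a normalisation that kills the one-parameter gauge ambiguity of eigenvectors; without it the linearisation in $(\lambda,u)$ would fail to be injective on the $\spn\{u_{0}\}$ direction. Regularity of $F$ in $A$ is automatic: the assignment $A\mapsto L_{A}$ is continuous linear from $C^{k}(\overline O,\R)^{n^{2}}$ into $\mathcal{B}(H^{1}_{0}(O),H^{-1}(O))$, and $F$ is polynomial in $(\lambda,u)$, so $F$ is $C^{k}$ overall.

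The core step is to verify that the partial derivative
\[
D_{(\lambda,u)}F(A_{0},\lambda_{0},u_{0})[\mu,v] = \bigl((L_{A_{0}}-\lambda_{0}I)v-\mu u_{0},\ \langle v,u_{0}\rangle\bigr)
\]
is a Banach-space isomorphism from $\R\times H^{1}_{0}(O)$ onto $H^{-1}(O)\times\R$. By standard elliptic theory, $L_{A_{0}}-\lambda_{0}I$ is Fredholm of index zero, with one-dimensional kernel $\spn\{u_{0}\}$; letting $u_{0}^{*}$ be the eigenfunction of the formal adjoint $L_{A_{0}}^{*}$ at $\lambda_{0}$, the range of $L_{A_{0}}-\lambda_{0}I$ equals the annihilator of $u_{0}^{*}$. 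Given $(g,c)\in H^{-1}(O)\times\R$, one selects
\[
\mu = -\frac{\langle g,u_{0}^{*}\rangle}{\langle u_{0},u_{0}^{*}\rangle}
\]
so that $g+\mu u_{0}$ lies in the range of $L_{A_{0}}-\lambda_{0}I$, inverts to obtain $v$ modulo $\spn\{u_{0}\}$, and then pins down the $u_{0}$-component of $v$ through the constraint $\langle v,u_{0}\rangle=c$. Boundedness of the inverse follows from the open mapping theorem. The implicit function theorem then yields $C^{k}$ maps $\boldsymbol{\lambda}\colon\mathcal{U}\to\R$ and $\mathbf{u}\colon\mathcal{U}\to H^{1}_{0}(O)$ on a neighbourhood $\mathcal{U}$ of $A_{0}$ with $F(A,\boldsymbol{\lambda}(A),\mathbf{u}(A))=0$, delivering both conclusions of the theorem.

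The main obstacle is the non-degeneracy $\langle u_{0},u_{0}^{*}\rangle\neq 0$, for which \emph{algebraic} simplicity—strictly stronger than the geometric simplicity used above—is indispensable. Indeed, if this pairing vanished then $u_{0}$ would lie in the range of $L_{A_{0}}-\lambda_{0}I$, so one could solve $(L_{A_{0}}-\lambda_{0}I)w=u_{0}$ and extend $u_{0}$ to a Jordan chain of length at least two, contradicting that the generalised eigenspace $\bigcup_{k}\ker\bigl((L_{A_{0}}-\lambda_{0}I)^{k}\bigr)$ is one-dimensional. All remaining ingredients—Fredholmness of $L_{A_{0}}-\lambda_{0}I$, existence and biorthogonality of $u_{0}^{*}$, and continuity of $A\mapsto L_{A}$—are routine elliptic/spectral input; the only subtlety in setting up $F$ is to take the target of $L_{A}$ as $H^{-1}(O)$ rather than $L^{2}(O)$, which is what makes $A\mapsto L_{A}$ bounded linear without any loss of derivatives.
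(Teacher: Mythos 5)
Your proof is essentially correct, but note that the paper itself offers no proof of this statement to compare against: Theorem~\ref{Theorem-Had-Mon} is imported verbatim from \cite{haddad2015differentiability}, and the authors only remark afterwards that the cited proof does not actually use the Dirichlet condition. Judged on its own terms, your implicit-function-theorem argument is the standard and correct route for this kind of result (and, to the best of my knowledge, is in the spirit of the cited reference). The key points are all in place: $A\mapsto L_A$ is bounded linear from $C^{k}(\overline O,\R)^{n^2}$ (indeed from $C^0$) into $\cB(H^1_0(O),H^{-1}(O))$, so $F$ is smooth; $L_{A_0}-\lambda_0 I$ is Fredholm of index zero from $H^1_0(O)$ to $H^{-1}(O)$ via Lax--Milgram/G\aa rding plus compactness of $H^1_0(O)\hookrightarrow H^{-1}(O)$; and your identification of the non-degeneracy $\langle u_0,u_0^*\rangle\neq 0$ with the absence of a Jordan chain is exactly where algebraic (rather than merely geometric) simplicity enters, which is the one genuinely delicate point. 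Two small remarks: (i) in the application in this paper $A_0$ is symmetric, so $u_0^*=u_0$ and the pairing is $\|u_0\|^2>0$ automatically, but your general treatment is what the quoted theorem actually requires; (ii) you should shrink $\mathcal U$ so that $\langle\mathbf u(A),u_0\rangle=\|u_0\|^2\neq 0$ guarantees $\mathbf u(A)\neq 0$, ensuring $(\boldsymbol\lambda(A),\mathbf u(A))$ is a genuine eigenpair --- your normalisation slot already delivers this for free. Your choice of normalisation differs from the $\|u_\eps\|=1$ convention used elsewhere in the paper, but the theorem as stated does not prescribe one, and rescaling is smooth, so nothing is lost.
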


Let $(A_{\eps,ij})$ be the entries of $A_\eps$. Note that  since $A_\eps$ is in $C^1(\Omega,\mathcal{B}(\R^n))$, we have that $(A_{\eps,ij})\in C^1(\Omega,\R)^{n^2}$.
We note further that $L_{A_\eps}=\Delta_\eps^D$ is uniformly elliptic \cite{froyland2015dynamic,froyland2017dynamic} and so this theorem applies to the eigenproblem \eqref{eq:eign-prob-coh-set-1}--\eqref{eq:eign-prob-coh-set-2} setting $\overline O=\Omega$.  We note that the proof in \cite{haddad2015differentiability} does not make use of the assumption of zero Dirichlet boundary data and in fact also applies to the Neumann boundary case.

In the subsequent results, $H$ denotes $H^1(\Omega)$ in the case of homogeneous Neumann boundary conditions, and $H^1_0(\Omega)$ in the case of homogeneous Dirichlet boundary conditions.  The following theorem establishes the existence of derivatives of the maps $\eps\mapsto u_\eps$ and $\eps\mapsto \lambda_\eps$ from $(-\eps_0,\eps_0)$ to $H$.  Let $V_0:=$ span$\{u_0\}^\perp\subset H$.

\begin{theorem}\label{existence-derivative}
Let $\lambda_0$ be algebraically simple and $(\lambda_\eps,u_\eps)\in\Lambda(\Delta_\eps^D)$ for $\eps\in(-\eps_0,\eps_0)$. Then there exists a function
$\dot{u}_0\in H$ and $\dot{\lambda}_0\in\mathbb{R}$ such that
\begin{equation*}
\lim_{\eps\rightarrow 0}\bigg\|\frac{u_\eps - u_0}{\eps} - \dot{u}_0\bigg\|_{H} = 0
\quad\text{and}\quad
\lim_{\eps\rightarrow 0}\bigg|\frac{\lambda_\eps-\lambda_0}{\eps} - \dot{\lambda}_0\bigg| = 0.
\end{equation*}
Furthermore, $\dot u_0\in V_0$.
\end{theorem}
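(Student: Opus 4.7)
The plan is to compose the abstract regularity result Theorem \ref{Theorem-Had-Mon} with the differentiability of the coefficient map $\eps\mapsto A_\eps$ supplied by Lemma \ref{lem:deriv-coeif-para-Lap}, and then to match the resulting branch of eigenpairs with the given $(\lambda_\eps,u_\eps)$, after which the orthogonality $\dot u_0\in V_0$ falls out of the normalisation constraint.

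First, by Lemma \ref{lem:deriv-coeif-para-Lap} the map $\eps\mapsto A_\eps$ is differentiable (hence continuous) at $0$ as a map into $C^1(\Omega,\cB(\R^n))$, so after shrinking $\eps_0$ I may assume $A_\eps\in\mathcal{U}$ for all $\eps\in(-\eps_0,\eps_0)$, where $\mathcal{U}$ is the $C^1$-neighbourhood of $A_0$ supplied by Theorem \ref{Theorem-Had-Mon} (applied with $k=1$, $\overline O=\Omega$; as the authors already noted, the same proof also covers the Neumann case). Theorem \ref{Theorem-Had-Mon} then provides $C^1$-maps $\boldsymbol\lambda:\mathcal{U}\to\R$ and $\mathbf{u}:\mathcal{U}\to H$ through $(\lambda_0,u_0)$, and the chain rule in Banach spaces yields differentiability at $\eps=0$ of $\eps\mapsto\boldsymbol\lambda(A_\eps)$ in $\R$ and of $\eps\mapsto\mathbf{u}(A_\eps)$ in $H$.

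Second, I need to identify this Had--Mignot branch with the given $(\lambda_\eps,u_\eps)$. Because $\lambda_0$ is algebraically simple and $A_\eps\to A_0$ in $C^1$, standard spectral perturbation theory for uniformly elliptic operators ensures that, for $\eps$ small, $\Delta^D_\eps$ has a unique (and simple) eigenvalue close to $\lambda_0$, which forces $\lambda_\eps=\boldsymbol\lambda(A_\eps)$; the associated eigenspace is one-dimensional, so $u_\eps$ and $\mathbf{u}(A_\eps)$ differ by a nonzero scalar factor. Fixing a continuous choice of sign and using $\|u_\eps\|=1$, I may write $u_\eps=\mathbf{u}(A_\eps)/\|\mathbf{u}(A_\eps)\|$. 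Since $\|\mathbf{u}(A_0)\|=\|u_0\|=1$, the denominator is bounded away from zero for small $\eps$, and the $L^2$-norm is $C^1$ off the origin, so differentiability transfers to $\eps\mapsto u_\eps$. This gives $\dot u_0\in H$ and $\dot\lambda_0\in\R$ with the claimed limits.

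The last assertion $\dot u_0\in V_0$ then follows by differentiating the normalisation identity $\langle u_\eps,u_\eps\rangle=1$ at $\eps=0$, which yields $2\langle\dot u_0,u_0\rangle=0$ and hence $\dot u_0\perp u_0$ in $L^2(\Omega,\ell)$. I expect the identification step to be the main subtlety: Theorem \ref{Theorem-Had-Mon} delivers an implicit-function-style branch parametrised by the coefficient $A$, while the statement fixes an a priori chosen family $(\lambda_\eps,u_\eps)$, and it is precisely the algebraic simplicity of $\lambda_0$ together with the continuity $A_\eps\to A_0$ that allows these two parametrisations to be matched (up to normalisation and sign) for small $\eps$.
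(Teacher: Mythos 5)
Your proposal is correct and follows essentially the same route as the paper: compose Theorem~\ref{Theorem-Had-Mon} with the differentiability of $\eps\mapsto A_\eps$ from Lemma~\ref{lem:deriv-coeif-para-Lap} via the chain rule, then obtain $\dot u_0\in V_0$ by differentiating the normalisation $\langle u_\eps,u_\eps\rangle=1$. The only difference is that you explicitly carry out the identification of the branch $(\boldsymbol\lambda(A_\eps),\mathbf{u}(A_\eps))$ with the prescribed normalised family $(\lambda_\eps,u_\eps)$ (matching up to sign and rescaling by $\|\mathbf{u}(A_\eps)\|$), a step the paper's proof leaves implicit; this is a legitimate refinement rather than a different approach.
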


\begin{proof}
Let $\mathcal{U}\ni A_0$ be the neighborhood and $\mathbf{u}:\mathcal{U}\rightarrow H$ and $\boldsymbol\lambda:\mathcal{U}\rightarrow\mathbb{R}$ the maps  according to Theorem~\ref{Theorem-Had-Mon}.  Since these maps are $C^1$, there exist bounded linear maps $B_1: C^1(\Omega,\R)^{n^2}\rightarrow H$ and $B_2:C^1(\Omega,\R)^{n^2}\rightarrow\mathbb{R}$ satisfying
\begin{equation*}
\lim_{\|A_\eps - A_0\|_{C^1(\Omega,\R)^{n^2}}\rightarrow 0} \frac{\|\mathbf{u}(A_\eps)-\mathbf{u}(A_0) - B_1(A_\eps - A_0)\|_{H}}{\|A_\eps - A_0\|_{C^1(\Omega,\R)^{n^2}}} = 0
\end{equation*}
and
\begin{equation*}
\lim_{\|A_\eps - A_0\|_{C^1(\Omega,\R)^{n^2}}\rightarrow 0} \frac{|\boldsymbol\lambda(A_\eps)-\boldsymbol\lambda(A_0) - B_2(A_\eps - A_0)|}{\|A_\eps - A_0\|_{C^1(\Omega,\R)^{n^2}}} = 0.
\end{equation*}

Define $\dot{u}_0 := B_1(\dot{A}_0)\in H$ and ${\dot\lambda_0}:= B_2(\dot{A}_0)$. Using  $A_{\eps,ij} = A_{0,ij} + \eps \dot{A}_{0,ij} + r^\eps_{ij}$ with
$\|r^\eps_{ij}\|_{C^1(\Omega,\R)} = o(\eps)$,
the differentiability results follow.

In order to show that $\dot u_0\in V_0$, we note that for small $\eps$, we have  $u_\eps = u_0+\eps \dot u_0+g^\eps$, where $g^\eps \in H$ is such that $\|g^\eps\|_{H} =o(\eps)$. We therefore have
$$1=\langle u_\eps, u_\eps\rangle = \langle u_0,u_0\rangle + 2\eps\langle u_0,\dot u_0\rangle+ 2\langle u_0,g^\eps\rangle = 1+2\eps\langle u_0,\dot u_0\rangle+ 2\langle u_0,g^\eps\rangle;$$
thus, considering the leading term of order $\eps$ we see that $\langle u_0,\dot u_0\rangle=0$ and therefore $\dot u_0\in V_0$.
\end{proof}

\section{A formula for the linear response}
\label{section-theo-comp-u'}

We will now derive a linear system that yields the linear response $\dot u_0$ as its solution.  To this end, we first show that the (weak) derivative of the products $\eps\mapsto\lambda_\eps u_\eps$ and $\eps\mapsto\Delta_\eps^Du_\eps$ can be computed by the usual product rule.

\begin{lemma}
\label{lem:lin-sys-lin-resp1}
For $\varphi\in H$,
\begin{eqnarray}
\label{dlamu}\lim_{\eps\rightarrow 0} \left\langle  \frac{\lambda_\eps u_\eps - \lambda_0 u_0}{\eps}, \varphi \right\rangle & = & \left\langle  \lambda_0\dot{u}_0 + \dot{\lambda}_0 u_0,  \varphi \right\rangle \quad\text{and}\\
\label{dDelu}\lim_{\eps\rightarrow 0}  \frac{a_\eps(u_\eps,\varphi) - a_0(u_0,\varphi)}{\eps} & =  & a_0(\dot u_0,\varphi) + \dot a_0(u_0,\varphi) .
\end{eqnarray}
\end{lemma}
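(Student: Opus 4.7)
The overall plan is that both limits are standard product-rule arguments, relying on the differentiability results established in Theorem~\ref{existence-derivative} and Proposition~\ref{thm:deriv-dyna-Lap}, together with Lemma~\ref{lem:deriv-coeif-para-Lap}. In each case I would add and subtract a suitable cross term so that one factor is frozen while the other is incremented, then estimate the leftover "double-increment'' term.

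For \eqref{dlamu} I would write
\[
\lambda_\eps u_\eps - \lambda_0 u_0 = (\lambda_\eps-\lambda_0)\,u_\eps + \lambda_0(u_\eps-u_0),
\]
divide by $\eps$, and pair with $\varphi$. Theorem~\ref{existence-derivative} gives $(\lambda_\eps-\lambda_0)/\eps\to\dot\lambda_0$ in $\R$ and $(u_\eps-u_0)/\eps\to\dot u_0$ in $H$, hence in $L^2$; together with $\|u_\eps\|=1$ and continuity of $\langle\cdot,\varphi\rangle$ on $L^2$, the first summand tends to $\dot\lambda_0\langle u_0,\varphi\rangle$ and the second to $\lambda_0\langle\dot u_0,\varphi\rangle$, yielding the claim.

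For \eqref{dDelu} I would use bilinearity of each $a_\eps$ and decompose
\[
a_\eps(u_\eps,\varphi) - a_0(u_0,\varphi)
= \bigl(a_\eps-a_0\bigr)(u_0,\varphi)
+ a_0(u_\eps-u_0,\varphi)
+ \bigl(a_\eps-a_0\bigr)(u_\eps-u_0,\varphi).
\]
Dividing by $\eps$, the first term tends to $\dot a_0(u_0,\varphi)$ by Proposition~\ref{thm:deriv-dyna-Lap}. For the second, the continuity of $a_0$ on $H\times H$ together with $(u_\eps-u_0)/\eps\to\dot u_0$ in $H$ yields $a_0(\dot u_0,\varphi)$.

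The only point requiring care, which I expect to be the main (though minor) obstacle, is the cross term $\eps^{-1}(a_\eps-a_0)(u_\eps-u_0,\varphi)$. I would bound it by
\[
\bigl|\eps^{-1}(a_\eps-a_0)(u_\eps-u_0,\varphi)\bigr|
\le \bigl\|\tfrac{A_\eps-A_0}{\eps}\bigr\|_{C^0(\Omega,\cB(\R^n))}\,\|\nabla(u_\eps-u_0)\|\,\|\nabla\varphi\|.
\]
By Lemma~\ref{lem:deriv-coeif-para-Lap}, $(A_\eps-A_0)/\eps\to\dot A_0$ in $C^1$ and hence the first factor stays bounded as $\eps\to 0$; by Theorem~\ref{existence-derivative}, $u_\eps\to u_0$ in $H$ so $\|\nabla(u_\eps-u_0)\|\to 0$; and $\|\nabla\varphi\|$ is fixed. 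Thus this cross term vanishes, and combining the three contributions gives $a_0(\dot u_0,\varphi)+\dot a_0(u_0,\varphi)$ as required.
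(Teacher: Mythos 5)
Your proof is correct and follows essentially the same route as the paper: both are product-rule arguments resting on Theorem~\ref{existence-derivative}, Proposition~\ref{thm:deriv-dyna-Lap} and Lemma~\ref{lem:deriv-coeif-para-Lap}. The paper substitutes the Taylor expansions $u_\eps = u_0+\eps\dot u_0+g^\eps$, $\lambda_\eps=\lambda_0+\eps\dot\lambda_0+\mu^\eps$ and multiplies out rather than using your add-and-subtract decomposition, but the content is identical; your explicit bound on the cross term $\eps^{-1}(a_\eps-a_0)(u_\eps-u_0,\varphi)$ just makes visible an estimate the paper absorbs into its $o(\eps)$ terms.
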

\begin{proof}
From Theorem \ref{existence-derivative} we have that $u_\eps = u_0+\eps \dot{u}_0 + g^\eps$ and
$\lambda_\eps = \lambda_0 +\eps \dot{\lambda}_0 + \mu^\eps$, where $\|g^\eps\|_{H}=o(\eps)$ and $|\mu^\eps| = o(\eps)$.
Thus,
$$
\lambda_\eps u_\eps = \lambda_0u_0+ \eps (\lambda_0 \dot{u}_0 + \dot{\lambda}_0u_0) + f^\eps
$$
with $\|f^\eps\| = o(\eps)$, so that $|\langle f^\eps,\varphi\rangle| \leq \|f^\eps\| \|\varphi\| = o(\eps)$ for each $\varphi\in H$.  This yields \eqref{dlamu}.

From Proposition~\ref{thm:deriv-dyna-Lap},
$a_\eps(\psi,\varphi) = a_0(\psi,\varphi)+\eps\dot a_o(\psi,\varphi)+o(\eps)$ for all $\psi,\varphi\in H$. Hence,
\begin{align*}
a_\eps(u_\eps,\varphi)
& = a_\eps(u_0+\eps\dot u_0+g^\eps) \\
& = a_\eps(u_0,\varphi) + \eps a_\eps(\dot u_0,\varphi) + o(\eps) \\
& = a_0(u_0,\varphi) + \eps \dot a_0(u_0,\varphi) + \eps a_0(\dot u_0,\varphi) + o(\eps),
\end{align*}
yielding \eqref{dDelu}.

\end{proof}

The following theorem establishes the existence of a unique solution of the linear system \eqref{eq:LR-nonweak-form} in a weak sense.

\begin{theorem}\label{th:lin-sys-lin-resp}
Let
$\dot{\lambda}_0$ and $\dot{u}_0$ be as in Theorem \ref{existence-derivative}. These linear responses $(\dot{u}_0,\dot{\lambda}_0)\in V_0\times \mathbb{R}$ are the unique solution to the equations:
\begin{equation}\label{eq:LR_weak}
  a_0(\dot u_0,\varphi) - \lambda_0\langle \dot u_0,\varphi\rangle = - \left(\dot a_0(u_0,\varphi) - \dot \lambda_0  \langle u_0,\varphi\rangle\right)
  \quad\text{for all } \varphi\in V_0.
\end{equation}
and
\begin{equation}\label{eq:LR_evalue}
  \dot{\lambda}_0 = \frac{\dot a_0(u_0, u_0)}{\|u_0\|^2}.
\end{equation}
\end{theorem}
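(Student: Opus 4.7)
The plan is to obtain \eqref{eq:LR_weak} by differentiating the weak eigenvalue equation $-a_\eps(u_\eps,\varphi)=\lambda_\eps\langle u_\eps,\varphi\rangle$ from \eqref{Eigen-Prob-Weak-form-2} with respect to $\eps$ at $\eps=0$. Concretely, I would subtract the $\eps=0$ equation from the $\eps$-equation, divide by $\eps$, and pass to the limit $\eps\to 0$, invoking Lemma~\ref{lem:lin-sys-lin-resp1} to evaluate the limits of the two difference quotients. This produces, for every $\varphi\in H$, a single linear identity coupling $\dot u_0$, $\dot\lambda_0$, $u_0$, $\lambda_0$ and $\dot a_0$. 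Restricting $\varphi$ to the subspace $V_0$ kills the term $\dot\lambda_0\langle u_0,\varphi\rangle$ (since $\langle u_0,\varphi\rangle=0$ on $V_0$), and rearrangement gives \eqref{eq:LR_weak}.

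To extract \eqref{eq:LR_evalue}, I would instead test the full differentiated identity with $\varphi=u_0\in H$, which is legitimate because the identity holds on all of $H$, not merely on $V_0$. Symmetry of $A_0$ implies $a_0(\dot u_0,u_0)=a_0(u_0,\dot u_0)$, and applying the unperturbed weak eigenvalue equation with test function $\dot u_0$ gives $a_0(u_0,\dot u_0)=-\lambda_0\langle u_0,\dot u_0\rangle$. By Theorem~\ref{existence-derivative} we have $\dot u_0\in V_0$, so $\langle u_0,\dot u_0\rangle=0$; hence both the $a_0$-contribution and the $\lambda_0\langle\dot u_0,u_0\rangle$-contribution vanish. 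What remains isolates $\dot\lambda_0\|u_0\|^2$ in terms of $\dot a_0(u_0,u_0)$, which is exactly \eqref{eq:LR_evalue}.

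For uniqueness I would let $(v,\mu)\in V_0\times\R$ be any second solution of \eqref{eq:LR_weak}--\eqref{eq:LR_evalue} and set $(w,\delta):=(\dot u_0-v,\dot\lambda_0-\mu)\in V_0\times\R$. By linearity, $(w,\delta)$ satisfies the homogeneous counterpart of \eqref{eq:LR_weak} (with $\dot a_0(u_0,\cdot)$ removed and $\dot\lambda_0$ replaced by $\delta$). Testing this homogeneous equation with $\varphi=u_0$ via the same symmetry-plus-eigenvalue-equation trick as above forces $\delta=0$. The residual statement is that $w\in V_0$ weakly solves $(\Delta_0^D-\lambda_0)w=0$; since $\lambda_0$ is algebraically simple and $\Delta_0^D$ is $L^2$-self-adjoint, its eigenspace is exactly $\spn\{u_0\}$, which meets $V_0$ trivially, forcing $w=0$.

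The step I expect to be the main obstacle is the last part of the uniqueness argument, whose crux is verifying that the restriction of $\Delta_0^D-\lambda_0$ to $V_0$ is genuinely invertible, i.e.\ that $\lambda_0$ lies in the resolvent set of $\Delta_0^D|_{V_0}$. This rests on the $L^2$-self-adjointness of $\Delta_0^D$ (so that $V_0=\spn\{u_0\}^\perp$ is an invariant subspace) together with algebraic simplicity of $\lambda_0$ (so that the restricted operator no longer sees $\lambda_0$ in its spectrum). Once this structural point is made, the remainder of the argument is a mechanical consequence of Lemma~\ref{lem:lin-sys-lin-resp1} and elementary rearrangement.
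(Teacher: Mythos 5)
Your proposal is correct and follows essentially the same route as the paper's proof: differentiate the weak eigenvalue identity via Lemma~\ref{lem:lin-sys-lin-resp1} to get the relation on all of $H$, split $H=\spn\{u_0\}\oplus V_0$ and test with $\varphi=u_0$ (using symmetry of $a_0$, the unperturbed eigenvalue equation and $\dot u_0\in V_0$) to isolate \eqref{eq:LR_evalue}, then prove uniqueness by subtracting two solutions and invoking simplicity of $\lambda_0$ together with membership in $V_0$. Your explicit justification that the $\varphi=u_0$ test yields zero on the left-hand side is in fact slightly more careful than the paper's one-line assertion, but the argument is the same.
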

\begin{proof}
We begin by showing that $\dot{\lambda}_0$ and $\dot{u}_0$ as in Theorem \ref{existence-derivative} solve (\ref{eq:LR_weak}) for all $\varphi\in V_0$ and (\ref{eq:LR_evalue}).
Subtract (\ref{dlamu}) from (\ref{dDelu});  we obtain 0 on the LHS because $u_\eps$ is the eigenfunction associated to the eigenvalue $\lambda_\eps$.
Rearranging the RHS we immediately obtain that (\ref{eq:LR_weak}) is satisfied for all $\varphi\in H$.

We now write $H=\spn\{u_0\}\oplus V_0$ and consider (\ref{eq:LR_weak}) for $\varphi$ according to this decomposition.
Substituting $\varphi = u_0$ into \eqref{eq:LR_weak} yields
\begin{equation}
  a_0(\dot u_0,u_0) - \lambda_0\langle \dot u_0, u_0\rangle = - \left(\dot a_0(u_0,u_0) - \dot \lambda_0  \langle u_0,u_0\rangle\right).
\end{equation}
The LHS is zero since $u_0$ is the eigenfunction with eigenvalue $\lambda_0$;  rearranging to solve for $\dot{\lambda}$ yields \eqref{eq:LR_evalue}.
Thus, (\ref{eq:LR_weak}) holding for all $\varphi\in H$ is equivalent to \eqref{eq:LR_weak}, \eqref{eq:LR_evalue}, proving the statement, except for uniqueness.

Suppose that there is another pair $(\dot{w},\dot{\nu})\in V_0\times\mathbb{R}$ satisfying (\ref{eq:LR_weak}) for all $\varphi\in H$.
Subtracting (\ref{eq:LR_weak}) with $(\dot{w},\dot{\nu})$ from (\ref{eq:LR_weak}) with $(\dot{u}_0,\dot{\lambda}_0)$ we obtain
\begin{equation}
  \label{diff}
  a_0(\dot{u}_0-\dot{w},\varphi) - \lambda_0\langle \dot{u}_0-\dot{w},\varphi \rangle
  = \dot{\lambda}_0 \langle u_0, \varphi\rangle
    - \dot{\nu} \langle u_0, \varphi\rangle
  \end{equation}
We again use the decomposition $H=\spn\{u_0\}\oplus V_0$. Substituting $\varphi=u_0$, and arguing as previously, we see that the LHS of (\ref{diff}) is zero and therefore that $\dot{\nu}=\dot{\lambda}_0$, i.e.
\[
a_0(\dot u_0 - \dot w, \varphi) - \lambda_0\langle \dot{u}_0-\dot{w},\varphi \rangle = 0 \quad \text{for all } \varphi \in H
\]
which implies that $\dot{u}_0-\dot{w}$ is a weak eigenfunction with eigenvalue $\lambda_0$.
Because $\lambda_0$ is simple, we must have $\dot{u}_0-\dot{w}\in \spn\{u_0\}$.
Recalling that $\dot{u}_0,\dot{w}\in V_0$, this implies that $\dot{u}_0-\dot{w}=0$.
Thus with $\dot{\lambda}_0$ as in (\ref{eq:LR_evalue}), there is a unique solution $\dot{u}_0$ to (\ref{eq:LR_weak}).
\end{proof}

We note that the strong form of \eqref{eq:LR_weak} is given by the equation
\begin{equation}
\label{eq:LR-nonweak-form}
	\begin{array}{ll}
    (\Delta_0^D - \lambda_0 I)\dot{u}_0=(\dot{\lambda}_0I-\dot{\Delta}_0^D)u_0&\text{ in }\Omega\\
    \end{array}
\end{equation}
with boundary conditions
\begin{align}
\label{eq:LR-nonweak-form-Neu}
  (\dot{A}_0\nabla u_0 + A_0\nabla \dot{u}_0)\bullet \nu & = 0 \qquad \text{in the Neumann, resp.}\\
\label{eq:LR-nonweak-form-Dir}
   \dot{u}_0 & = 0 \qquad \text{in the Dirichlet case.}
\end{align}
In order to see this, multiply \eqref{eq:LR-nonweak-form} with a test function $\varphi$ and apply the divergence theorem,this yields
\begin{equation*}
\begin{aligned}
-\int_\Omega  A_0\nabla \dot{u}_0 \bullet \nabla\varphi \; d\ell +& \int_{\partial\Omega}\varphi\cdot A_0\nabla \dot{u}_0\bullet\nu \;d\ell_{n-1} -
\lambda_0\int_\Omega   \dot{u}_0\cdot \varphi \; d\ell\\
&=
\dot{\lambda}_0 \int_\Omega  u_0\cdot \varphi \; d\ell +
\int_\Omega \dot{A}_0\nabla u_0 \bullet \nabla \varphi \; d\ell - \int_{\partial\Omega}\varphi\cdot \dot{A}_0\nabla u_0\bullet\nu \;d\ell_{n-1}.
\end{aligned}
\end{equation*}
The boundary integrals either vanish if $\varphi\in H^1_0(\Omega)$ (the Dirichlet case) or if the (natural) boundary condition \eqref{eq:LR-nonweak-form-Neu} is satisfied.

\begin{remark}
We note that the expression (\ref{eq:LR-nonweak-form}) is reminiscent of the classical linear response formula for the invariant density of a deterministic dynamical system.
In this setting, one has a family of transfer operators $\{\mathcal{L}_\eps\}$ generated by a family of maps $\{T_\eps\}$.
The (typically assumed unique) fixed point $h_\eps$ of $\mathcal{L}_\eps$ is the invariant density of $T_\eps$.
It is easy to verify the identity $(I-\mathcal{L}_\eps)(h_\eps-h_0)=(\mathcal{L}_\eps-\mathcal{L}_0)h_0$.
Dividing through by $\eps$ and taking the limit as $\eps\to 0$, one is able to show in certain situations that the limits $\dot{h}_0:=\lim_{\eps\to 0}(h_\eps-h_0)/\eps$ and $\dot{\mathcal{L}}_0:=\lim_{\eps\to 0}(\mathcal{L}_\eps-\mathcal{L}_0)/\eps$ exist in suitable senses, see e.g.\ \cite{liverani_notes}.
This leads to $(I-\mathcal{L}_0)\dot{h}_0=\dot{\mathcal{L}}_0h_0$, which is of the form (\ref{eq:LR-nonweak-form}) with $h_0, \dot{h}_0, \mathcal{L}_0, \dot{\mathcal{L}}_0$ replaced by $u_0, \dot{u}_0, \Delta^D_0, \dot{\Delta}^D_0$, respectively, noting that $\lambda_0=1$ and $\dot{\lambda}_0=0$.
\end{remark}

\section{Computing the linear response numerically}
\label{section-numerical-computation}

We now describe how to compute the linear response $\dot u_0$ numerically.
To this end, we approximately solve the weak form \eqref{eq:LR_weak} using the method described in \cite{FrJu18}.  That is, we consider \eqref{eq:LR_weak} on a finite-dimensional approximation space $V_N\subset H$, denoting the approximations of $\lambda, \dot{\lambda}, u_0, \dot{u_0}$ by $\tilde{\lambda}, \dot{\tilde{\lambda}}, \tilde{u}_0, \dot{\tilde{u}}_0$, respectively.
Instead of choosing $V_N$ as a subspace of $V_0$ (as would be required by \eqref{eq:LR_weak}), we enforce $\dot{\tilde u}_0\in \tilde V_0:=\spn(\tilde u_0)^\bot$ by adding an additional constraint.
In practice, the approximation space will be realised as a finite element space, typically using linear triangular Lagrange elements.

In \cite{FrJu18}, two different variants of a finite-element discretisation of the basic
eigenproblem for the dynamic Laplacian have been proposed, one based on the evaluation
of the right Cauchy Green deformation tensor (the \textit{CG method}) and one based on an
explicit approximation of the transfer operator associated to $T_\eps$ (the \textit{TO method}).
We now describe how to use both variants in order to compute $\dot u_0$.

\subsection{The CG Method}

 Let $\varphi_1,\ldots,\varphi_N$ be a basis for $V_N$.  As described in \cite{FrJu18} we obtain an approximation $\tilde\lambda_0,\tilde u_0$ of the eigenpair $\lambda_0,u_0$ by solving the matrix eigenproblem
\[
K \tilde {\bm u}_0 = \tilde\lambda_0 M\tilde {\bm u}_0,
\]
where
\begin{equation}\label{eq:KandM}
\begin{aligned}
K = -\left(\int_\Omega  A_0\nabla \varphi_j \bullet \nabla\varphi_k \;d\ell\right)_{j,k}, \quad
M = &\left(\int_\Omega  \varphi_j\cdot \varphi_k \; d\ell\right)_{j,k}
\end{aligned}
\end{equation}
are the \emph{stiffness} and \emph{mass} matrix, respectively, and ${\tilde{\bm{u}}}_0\in \R^N$ is the vector of coefficents of $\tilde u_0$ with respect to the chosen basis.

Similarly, we define the Galerkin approximation $\dot{\tilde{u}}_0 \in V_N$ of $\dot{u}_0$  by requiring it to satisfy \eqref{eq:LR_weak} for $\varphi=\varphi_j, j=1,\ldots,N$. This yields the linear system
\begin{equation}\label{eq:Galerkin_Matrix}
\begin{aligned}
(K  - \tilde\lambda_0 M) \dot{\tilde{\bm{u}}}_0 = (\dot{\tilde \lambda}_0 M - L) {\tilde{\bm{u}}}_0,
\end{aligned}
\end{equation}
for the coefficient vector $\dot{\tilde{\bm{u}}}_0$ of $\tilde u_0$ with respect to the basis $\varphi_1,\ldots,\varphi_N$. Here,
\begin{equation}\label{Mdef}
\begin{aligned}
L = -\left(\int_\Omega  \dot{A}_0 \nabla \varphi_j \bullet \nabla\varphi_k \;d\ell\right)_{j,k}
\end{aligned}
\end{equation}
is the ``linear response'' matrix.

Instead of choosing $V_N$ as a subspace of $V_0$, we enforce $\dot{\tilde u}_0\in \tilde V_0:=\spn(\tilde u_0)^\bot$ by adding an additional constraint on the coefficient vectors ${\tilde{\bm{u}}}_0$ and $\dot{\tilde{\bm{u}}}_0$, namely
\begin{equation}
\label{eq:orthogonality}
{\tilde{\bm{u}}}_0^\top M \dot{\tilde{\bm{u}}}_0 = 0
\end{equation}
which we append to \eqref{eq:Galerkin_Matrix}. We combine \eqref{eq:Galerkin_Matrix} and \eqref{eq:orthogonality} into a single linear system which allows to solve for $\dot{\tilde u}_0$ and $\dot{\tilde \lambda}_0$ simultaneously:
\begin{equation}
\label{eq:extended_system}
\left[
    \begin{array}{cc}
      K-\tilde\lambda_0 M & -M\tilde{\bm{u}}_0 \\
      {\tilde{\bm{u}}}_0^\top M & 0\\
    \end{array}
  \right]\left[
               \begin{array}{c}
                \dot{\tilde{\bm{u}}}_0 \\
                \dot{\tilde\lambda}_0 \\
                \end{array}
          \right]
 = \left[\begin{array}{c}
 -L\tilde{\bm{u}}_0 \\
 0\\
 \end{array}
 \right]
 \end{equation}

Note that according to our standing assumption, $\lambda_0$ is simple and so $\tilde\lambda_0$ is simple if the elements are fine enough (cf.~\cite{ErnGuermond04}, Lemma 3.65 and  \cite{schilling2019higher}) and the kernel of the matrix $K-\tilde\lambda_0 M$ is spanned by $\tilde {\bm u}_0$. Thus, on $\tilde V_0$, the matrix $K-\tilde\lambda_0 M$ is nonsingular and ($K, M$ and $L$ are symmetric)
\begin{align*}
\tilde{\bm u}_0^\top (K-\tilde\lambda_0 M) \dot{\tilde{\bm u}}_0 =
\tilde{\bm u}_0^\top (\dot{\tilde\lambda}_0 M-L) \tilde{\bm u}_0 = 0,
\end{align*}
i.e.\ the right hand side $(\dot{\tilde\lambda}_0 M-L) \tilde{\bm u}_0$ of \eqref{eq:Galerkin_Matrix} is in $\tilde V_0$.  The system \eqref{eq:Galerkin_Matrix} therefore has a unique solution on $\tilde V_0$ or equivalently:
 \begin{prop}
The linear system \eqref{eq:extended_system} has a unique solution.
 \end{prop}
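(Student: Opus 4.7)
The plan is to prove invertibility of the $(N+1)\times(N+1)$ block matrix in \eqref{eq:extended_system} by showing its kernel is trivial, which is equivalent to uniqueness (and existence, since the matrix is square). Most of the needed ingredients are already assembled in the paragraph preceding the proposition; what remains is to combine them cleanly.

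First I would suppose $(\dot{\bm v},\dot\mu)\in\R^N\times\R$ satisfies the homogeneous system
\[
(K-\tilde\lambda_0 M)\dot{\bm v} - \dot\mu\, M\tilde{\bm u}_0 = 0,
\qquad \tilde{\bm u}_0^\top M \dot{\bm v} = 0,
\]
and show $\dot{\bm v}=0$, $\dot\mu=0$. Left-multiplying the first equation by $\tilde{\bm u}_0^\top$ and using symmetry of $K$ and $M$ together with the eigenvalue relation $(K-\tilde\lambda_0 M)\tilde{\bm u}_0=0$, the left-hand side vanishes, leaving $\dot\mu\,\tilde{\bm u}_0^\top M\tilde{\bm u}_0 = 0$. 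Since $M$ is the mass matrix of a linearly independent basis it is positive definite, so $\tilde{\bm u}_0^\top M\tilde{\bm u}_0 > 0$ and therefore $\dot\mu=0$.

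Next, the first equation reduces to $(K-\tilde\lambda_0 M)\dot{\bm v}=0$. By the simplicity argument already invoked (Lemma 3.65 of \cite{ErnGuermond04} together with \cite{schilling2019higher}, giving simplicity of $\tilde\lambda_0$ once the mesh is fine enough), the kernel of $K-\tilde\lambda_0 M$ is one-dimensional and spanned by $\tilde{\bm u}_0$. Hence $\dot{\bm v}=c\tilde{\bm u}_0$ for some $c\in\R$. The orthogonality constraint $\tilde{\bm u}_0^\top M\dot{\bm v}=0$ then forces $c\,\tilde{\bm u}_0^\top M\tilde{\bm u}_0=0$, and positive definiteness of $M$ again gives $c=0$, so $\dot{\bm v}=0$.

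The main (and really only) subtlety is making sure that the simplicity hypothesis on $\lambda_0$ transfers to the discrete eigenvalue $\tilde\lambda_0$; this is exactly the hypothesis ``the elements are fine enough'' that the paragraph before the proposition assumes, citing \cite{ErnGuermond04,schilling2019higher}. Once that is in hand, the argument is a standard bordered-system/Lagrange-multiplier argument and no further estimates are needed. I would conclude by remarking that uniqueness of the solution to the square linear system \eqref{eq:extended_system} is equivalent to its solvability for every right-hand side, and in particular for the specific right-hand side $(-L\tilde{\bm u}_0,0)^\top$ appearing in \eqref{eq:extended_system}.
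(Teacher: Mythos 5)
Your proof is correct and rests on exactly the same ingredients as the paper's: discrete simplicity of $\tilde\lambda_0$ on a sufficiently fine mesh (so that $\ker(K-\tilde\lambda_0 M)=\spn\{\tilde{\bm u}_0\}$), symmetry of $K$ and $M$, and positive definiteness of the mass matrix. The paper packages this as a Fredholm-alternative argument for the reduced system \eqref{eq:Galerkin_Matrix} restricted to $\tilde V_0$ (checking that the right-hand side lies in $\tilde V_0$), while you show the bordered $(N+1)\times(N+1)$ matrix has trivial kernel; the two are equivalent and yours is, if anything, the more direct presentation.
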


\subsection{The TO Method}

The second variant of the finite-element based computation of the linear response $\dot{\tilde u}$ employs an explicit approximation of the transfer operator $T_{0,*}$ associated to $T_0$.  It yields an alternative way to compute the matrices $K$ and $L$ in \eqref{Mdef} -- everything else remains unchanged from the previous section.  In particular, we again solve the linear system \eqref{eq:extended_system} in order to obtain the approximate linear response $\dot{\tilde{u}}_0$.

\paragraph{Approximating the transfer operator.} In addition to $V_N$, we choose a finite-dimensional subspace $V_N^1\subset H_0^1(T_0(\Omega))$ in the case of Dirichlet boundary conditions (resp.\ $V_N^1\subset H^1(T_0(\Omega))$ in the case of Neumann boundary conditions, cf.\ the discussion on the appropriate spaces in the preceeding section). Let $\varphi_1^1,\ldots,\varphi_N^1$ be a basis of $V_N^1$. In order to approximate
\[
T_{0,*}\varphi_j \approx \sum_{k=1}^N \alpha_{jk} \varphi_k^1,
\]
we choose a set $\{x_1^1,\ldots,x_N^1\}$ of sample points in $T_0(\Omega)$ and require that
\[
T_{0,*}\varphi_j(x_m^1) = \sum_{k=1}^N \alpha_{jk} \varphi_k^1(x_m^1)
\]
for $j,m=1,\ldots,N$. If the $\varphi_j^1$ are a nodal basis with respect to the sample points $x_1^1,\ldots,x_N^1$, then $\varphi_k^1(x_m^1)=\delta_{km}$ and thus
$
\alpha_{jm} = T_{0,*}\varphi_j(x_m^1) = \varphi_j(T_{0,*}^{-1}(x_m^1)).
$
In particular, if the sample points $x_m^1$ are chosen as the image points $x_m^1=T_0(x_m)$ of the sample points $x_1,\ldots,x_N$ in $\Omega$ and the basis $\varphi_1,\ldots,\varphi_N$ is a nodal basis with respect to the points $x_1,\ldots,x_N$, then $\alpha_{jm}=\varphi_j(T_{0,*}^{-1}(x_m^1)) = \varphi_j(x_m)=\delta_{jm}$, i.e.\ the representation matrix ${\bm\alpha}:=(\alpha_{jm})_{jm}$ of $T_{0,*}$ with respect to the two nodal bases $\varphi_1,\ldots,\varphi_N$ and $\varphi_1^1,\ldots,\varphi_N^1$ is the identity matrix.
This latter case is the ``adaptive'' TO method from \cite{FrJu18}, where here we are considering only two discrete time-instances.

\paragraph{Approximating the stiffness matrix $K$.} With
$
K^0 := \left(\int_{\Omega}  \nabla  \varphi_j\bullet\nabla \varphi_k \; d\ell\right)_{j,k}
$
and $K^1:=\bm\alpha^\top K^0\bm\alpha$ we obtain
\begin{equation}
K=-\frac12(K^0+K^1) \approx -\frac{1}{2}\left(\int_{\Omega}  \nabla  \varphi_j\bullet\nabla \varphi_\ell \; d\ell + \int_{T_0(\Omega)}  \nabla (T_{\eps,*} \varphi_j)\bullet\nabla (T_{0,*}\varphi_\ell) \; d\ell\right)
\end{equation}
as an approximation to the stiffness matrix in \eqref{eq:KandM}.

\paragraph{Approximating the stiffness response matrix $L$.} We next describe an alternative way to compute the matrix $L$ in \eqref{Mdef} based on the explicit approximation of the transfer operater described above. We will use only function evaluations of $T_0$ and $\dot{T}_0$. First, we manipulate the expression for $L$.
\begin{prop}\label{prop:dotAfromT*}
For $f,g\in H^1(\Omega)$,
\begin{equation*}\label{eq:analytic-form-L-TO}
\begin{aligned}
-\int_\Omega\dot{A}_0\nabla f\bullet\nabla g\ d\ell
&= \int_{T_0(\Omega)}   D(T_{0,*}\dot{T}_0)^{sym}\nabla T_{0,*}f \bullet \nabla T_{0,*}g\;d\ell.
\end{aligned}
\end{equation*}
\end{prop}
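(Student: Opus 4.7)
The plan is to establish the identity pointwise by a change of variables $y = T_0(x)$ and some linear algebra manipulations, then integrate. First I would introduce the shorthand $M := (DT_0)^{-1}$ and $N := D\dot T_0$, both evaluated at $x$, so that Lemma~\ref{lem:deriv-coeif-para-Lap} reads $-\dot A_0 = (MNM M^{\top})^{\sym}$. Next, I would apply the chain rule to compute, at $y = T_0(x)$,
\[
\nabla_y(T_{0,*}f)(y) = M^{\top}\nabla f(x), \qquad D_y(T_{0,*}\dot T_0)(y) = N \cdot M,
\]
using $D(T_0^{-1})(y) = M(x)$ and $T_{0,*}\dot T_0 = \dot T_0\circ T_0^{-1}$. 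The analogous formula for $g$ then lets me rewrite the integrand on the right-hand side, evaluated at $y=T_0(x)$, as
\[
D(T_{0,*}\dot T_0)^{\sym}\nabla T_{0,*}f \bullet \nabla T_{0,*}g \;=\; (NM)^{\sym} M^{\top}\nabla f \bullet M^{\top}\nabla g.
\]

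The next key step is the algebraic identity $(MPM^{\top})^{\sym} = M P^{\sym} M^{\top}$, which follows immediately from $(MPM^{\top})^{\top} = M P^{\top} M^{\top}$. Applying this with $P = NM$ gives
\[
-\dot A_0 \nabla f \bullet \nabla g = M(NM)^{\sym} M^{\top}\nabla f \bullet \nabla g = (NM)^{\sym} M^{\top}\nabla f \bullet M^{\top}\nabla g,
\]
which exactly matches the expression obtained above for the integrand on the right-hand side at the point $y = T_0(x)$. Finally, integrating both sides over $\Omega$ and changing variables $y = T_0(x)$ on the right-hand side — using that $T_0$ is volume preserving so that $|\det DT_0|\equiv 1$ and $d\ell_y = d\ell_x$ — converts the integral over $\Omega$ into the integral over $T_0(\Omega)$, yielding the claimed identity.

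I expect the only minor subtlety to be the symmetry manipulation: because $\nabla f$ and $\nabla g$ are distinct test fields, one cannot simply drop the $\sym$ operation, and care is needed to verify that conjugation by $M$ commutes with symmetrisation. Once this is checked, the rest is a routine change of variables combined with the chain rule formulas for pushforwards.
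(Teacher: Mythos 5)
Your proof is correct and follows essentially the same route as the paper: the chain rule for the pushforward gradient and for $D(T_{0,*}\dot T_0)$, moving factors of $(DT_0)^{-1}$ across the dot product, and a volume-preserving change of variables $y=T_0(x)$. The only cosmetic difference is that you dispatch the symmetrisation in one stroke via the identity $(MPM^{\top})^{\sym}=MP^{\sym}M^{\top}$, whereas the paper carries out the computation separately for the term and its transpose and then averages.
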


\begin{proof}
We recall from \eqref{eq:derv-mat-coef} that
$\dot{A}_0 = -\left((DT_0)^{-1}D\dot{T}_0 (DT_0)^{-1}(DT_0)^{-\top}\right)^\sym$.
Next, we compute
\begin{align*}
    & \int_\Omega  (DT_0)^{-1}D\dot{T}_0 (DT_0)^{-1}(DT_0)^{-\top}\nabla f \bullet \nabla g \;d\ell \\
&= \int_\Omega  D\dot{T}_0 (DT_0)^{-1}(DT_0)^{-\top}\nabla f \bullet (DT_0)^{-\top}\nabla g \;d\ell \\
&= \int_\Omega  (D\dot{T}_0) (DT_0)^{-1}(\nabla T_{0,*}f\circ T_0) \bullet (\nabla T_{0,*}g\circ T_0) \;d\ell\\
&= \int_{T_0(\Omega)}  (D\dot{T}_0\circ T_0^{-1})\ ((DT_0)^{-1}\circ T_0^{-1})\nabla T_{0,*}f \bullet \nabla T_{0,*}g \;d\ell\\
&= \int_{T_0(\Omega)}  (D\dot{T}_0\circ T_0^{-1})\ DT^{-1}_0\nabla T_{0,*}f \bullet \nabla T_{0,*}g \;d\ell\\
&= \int_{T_0(\Omega)}  D(T_{0,*}\dot{T}_0)\nabla T_{0,*}f \bullet \nabla T_{0,*}g \;d\ell\\
\end{align*}
Similarly, we have
\begin{align*}
    \int_\Omega  \left((DT_0)^{-1}D\dot{T}_0 (DT_0)^{-1}(DT_0)^{-\top}\right)^\top\nabla f \bullet \nabla g \;d\ell
     = \int_{T_0(\Omega)}   (D(T_{0,*}\dot{T}_0))^\top \nabla T_{0,*}f \bullet \nabla T_{0,*}g\;d\ell.
\end{align*}
We thus obtain
\begin{equation*}\begin{aligned}
-\int_\Omega\dot{A}_0\nabla f\bullet\nabla g\;d\ell
&= \int_{T_0(\Omega)}   D(T_{0,*}\dot{T}_0)^{sym} \nabla T_{0,*}f \bullet \nabla T_{0,*}g\;d\ell.
\end{aligned}
\end{equation*}
\end{proof}

The right-hand-side of the expression in Proposition \ref{prop:dotAfromT*} has two types of terms, we now discuss their approximation.  Given a function $g\in H^1(\Omega)$ we approximate it in $V_N$ by
\[
g \approx \sum_{j}g_j\varphi_j,
\]
where $g_j=g(x_j)$, since we assume $\varphi_1,\ldots,\varphi_N$ to be a nodal basis on the nodes $x_1,\ldots,x_N$.  In the case of the adaptive TO method, the approximation of the pushforward $T_{0,*}g$ is therefore given by $\sum_{j,k} g_j\alpha_{jk}\varphi_k^1=\sum_k g_k\varphi_k^1$ (because $\alpha_{jk}=\delta_{jk})$.
Finally, following \cite{FrJu18} we approximate
\[
\nabla T_{0,*} g \approx \sum_{k} g_k\nabla\varphi_k^1,
\]
in particular, we have $\nabla T_{0,*} \varphi_j \approx \nabla\varphi_j^1$.
Now we discuss the approximation of the term $D(T_{0,*}\dot{T}_0)$ in \eqref{eq:analytic-form-L-TO}.
We denote by $\dot{T}_{0,1},\dots, \dot{T}_{0,n}$ the component functions of $\dot{T}_0$. Correspondingly,
\[
T_{0,*}\dot{T}_0 =\dot{T}_0\circ T_0^{-1} = ( \dot{T}_{0,1}\circ T_0^{-1},\dots, \dot{T}_{0,n}\circ T_0^{-1}) =( T_{0,*}\dot{T}_{0,1},\dots, T_{0,*}\dot{T}_{0,n}).
\]
Since each $\dot{T}_{0,i}$ is a scalar-valued function, we approximate $\nabla (T_{0,*}\dot{T}_{0,i})$ in exactly the same way as we approximated $\nabla T_{0,*} g$, namely, we write
$\nabla (T_{0,*}\dot{T}_{0,i}) \approx \sum_{s} w_s^i\nabla\varphi^1_s$, where $w_s^i=\dot{T}_{0,i}(x_s)$.
Thus
\begin{align*}
    D(T_{0,*}\dot{T}_0) = \left[\begin{array}{c}
(\nabla T_{0,*}\dot{T}_{0,1})^\top \\
\vdots \\
(\nabla T_{0,*}\dot{T}_{0,n})^\top
\end{array} \right] \approx  W\; D\Phi,
\end{align*}
where $W=(w_s^k)_{ks}$ and $D\Phi=(\partial_k\varphi_s^1)_{sk}$.
We then obtain the approximation
\begin{align*}
      D(T_{0,*}\dot{T}_0) \nabla T_{0,*} \varphi_j \bullet \nabla T_{0,*} \varphi_k
& \approx  W D\Phi \;\nabla\varphi_j^1 \bullet \nabla\varphi_k^1
\end{align*}
which, using Proposition~\ref{prop:dotAfromT*}, yields
\begin{equation*}
\begin{aligned}
L_{jk} = -\int_\Omega \dot{A}_0\nabla \varphi_j\bullet\nabla \varphi_k \;d\ell
& \approx  \int_{T_0(\Omega)} (W D\Phi)^\text{sym} \;\nabla\varphi_j^1 \bullet \nabla\varphi_k^1\;d\ell
\end{aligned}
\end{equation*}
as approximations for the entries of the matrix $L$ in \eqref{Mdef}.


\section{Experiments}\label{sec:experiments}

The code for the following experiments is available in the \href{https://github.com/gaioguy/FEMDL}{\texttt{FEMDL}}\footnote{Available at \href{https://github.com/gaioguy/FEMDL}{\texttt{https://github.com/gaioguy/FEMDL}}} package. Since we identify coherent sets as level sets of eigenfunctions, and are interested in the evolution of coherent sets, we will begin this section with a short note about the evolution of level sets.

\subsection{Level-Set Evolution}\label{sec:lvl-set-evo}

We wish to describe the change of the level sets of $u_\eps$ as we perturb the parameter $\eps$. From the level-set method \cite{osher1988fronts}, we note the following. For $\eps\in[-\eps_0,\eps_0]$, let $\Gamma_\eps = \{x\in \Omega: u_\varepsilon(x)=c\}$ be a differentiable family of nontrivial closed curves in $\Omega$.
Because $u_\varepsilon$ varies as $\eps$ increases from 0, so too do the curves $\Gamma_\eps$.
Define the function $s:\Gamma_0\to\mathbb{R}$ to be the instantaneous speed of the curve $\Gamma_0$ with respect to $\eps$ in the direction normal to $\Gamma_0$.
Then $s$ satisfies the level-set equation
\begin{equation}
\dot{u}_0 + s |\nabla u_0| = 0
\end{equation}
and
$$\frac{\partial\Gamma_\eps}{\partial\eps}|_{\eps=0} = \frac{-\dot{u}_0}{|\nabla u_0|}\nabla u_0.
$$
Extending this formula to all of $\Omega$ we obtain a vector field
$$v_{\rm level}:=\frac{-\dot{u}_0}{|\nabla u_0|}\nabla u_0,$$
which describes the instantaneous evolution of level sets of $u_0$.
We will use the vector field $v_{\rm level}$ in the following experiments to visualise the evolution of coherent sets with small changes in $\eps$.
We are not directly concerned with the possibility that the level sets occasionally undergo topological bifurcations as $\eps$ is varied, however, it is well known \cite{osher1988fronts} that such bifurcations are seamlessly captured by smooth evolution of the $u_\eps$ with $\eps$.

\subsection{The Standard Map}

We start with the standard map on the flat 2-torus, given by
\begin{equation}
\label{eq:stdmap}
T_\eps(x,y)=(x+y+(a+\eps)\sin x,y+(a+\eps)\sin x)\pmod{2\pi}, \quad a =0.98.
\end{equation}
The parameter $a+\eps$ controls the nonlinearity of the map and we investigate how varying $\eps$ from $0$ affects coherent sets.  For the computations, we use a Delaunay triangulation on a regular grid of $100\times 100$ points on the 2-torus and Gauss quadrature of degree 2 in order to approximate the integrals in the CG approach.

In Figure \ref{fig:stdmap_u0} (left), we show the eigenvector $u_0$ at the second eigenvalue $\lambda_0=-1.08$\footnote{Note that the first eigenfunction is constant.  The experiment works equally well for higher eigenfunctions, cf.\ the code at \href{https://github.com/gaioguy/FEMDL}{\texttt{https://github.com/gaioguy/FEMDL}}.} of the dynamic Laplacian $\Delta^D_0$ for the nominal value $a=0.98$ (corresponding to $\eps=0$), which identifies a coherent set in the center of the domain (red).
Figure \ref{fig:stdmap_u0} (second and third from left) displays $\dot{u}_0$ and $u_0+\eps \dot{u}_0$ for $\eps=0.5$, which -- even though only a linear extrapolation -- is quite similar to the exact eigenvector $u_\eps$ for $\eps=0.5$ at the second eigenvalue $\lambda_\eps = -1.23$ of $\Delta^D_\eps$ shown in Figure \ref{fig:stdmap_u0} (right). In fact, using again Gauss quadrature of order 2, we obtain the relative $L^2$-error $\|u_\eps - (u_0+\eps \dot{u}_0)\|/\|u_0\| \approx 0.03$. We also obtain $\dot\lambda_0 = -0.23$, which results in the estimate $\lambda_0+\eps\dot\lambda_0 = -1.19$  for $\lambda_\eps$, i.e.\ using $\dot\lambda_0$ we get an estimate of $\lambda_\eps$ with, again, a relative error of $3\%$.
Because the more negative $\lambda_0$ is, the less coherent the associated coherent sets, $\dot\lambda_0<0$ indicates a loss of coherence as $\eps$ is increased.
 These numbers and figures are obtained from the CG approach. The results from the TO approach are similar and in fact visually indistinguishable, so we do not show them here.  Note that this is an advantage for the TO approach since its computational effort is considerably lower and it only requires point evaluations of the flow map.
\begin{figure}[htbp]
  \centering
  \includegraphics[width=0.25\textwidth]{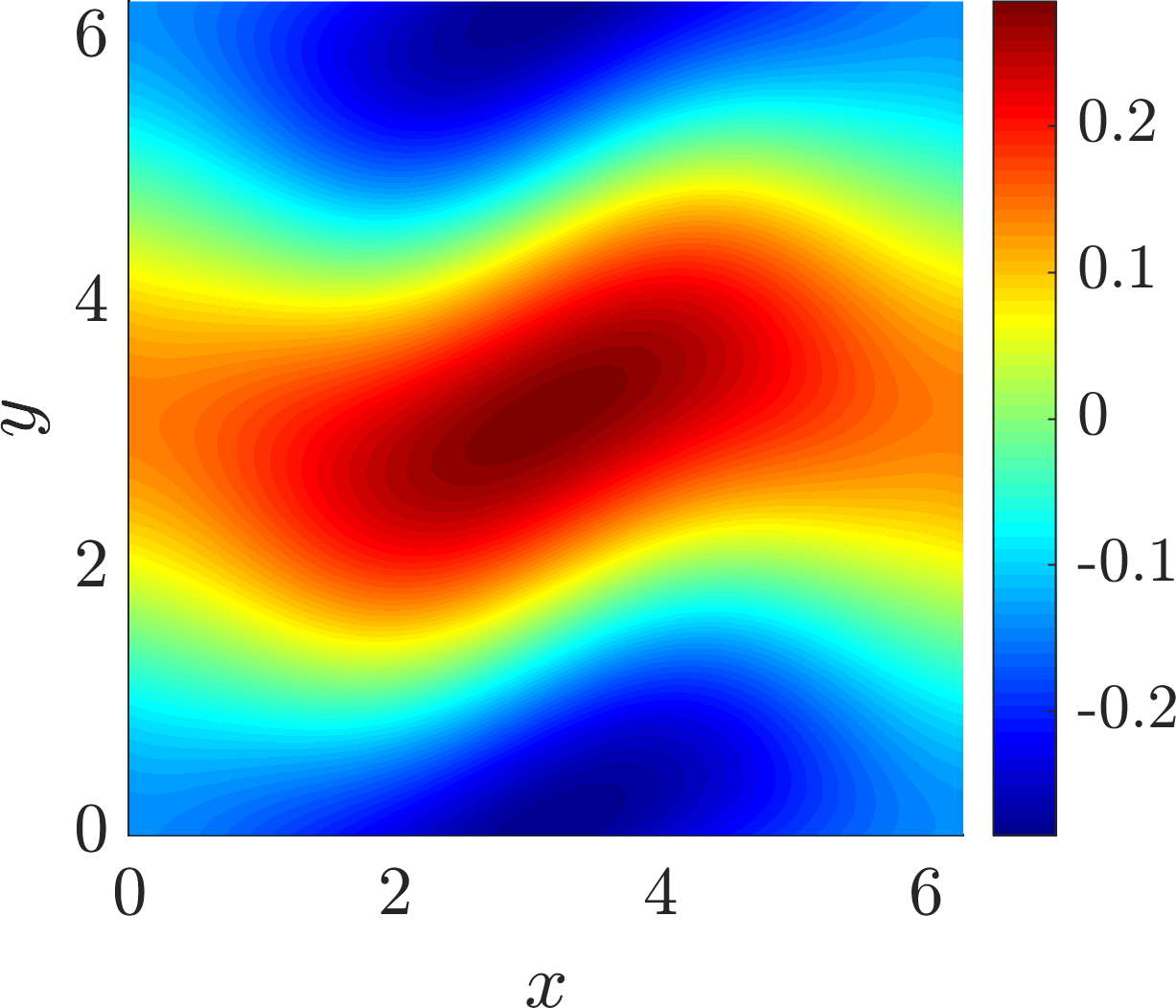}
  \includegraphics[width=0.24\textwidth]{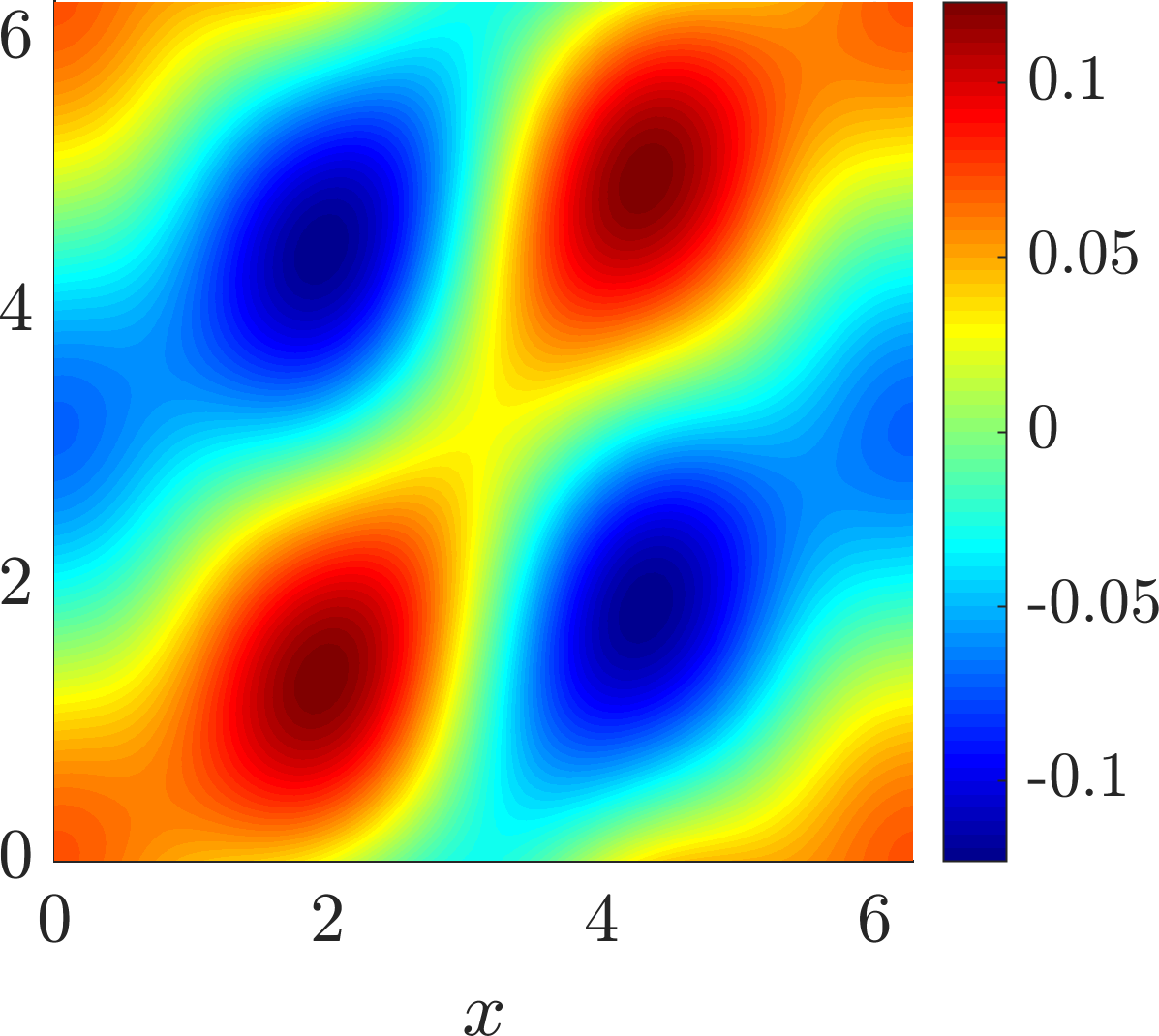}
  \includegraphics[width=0.24\textwidth]{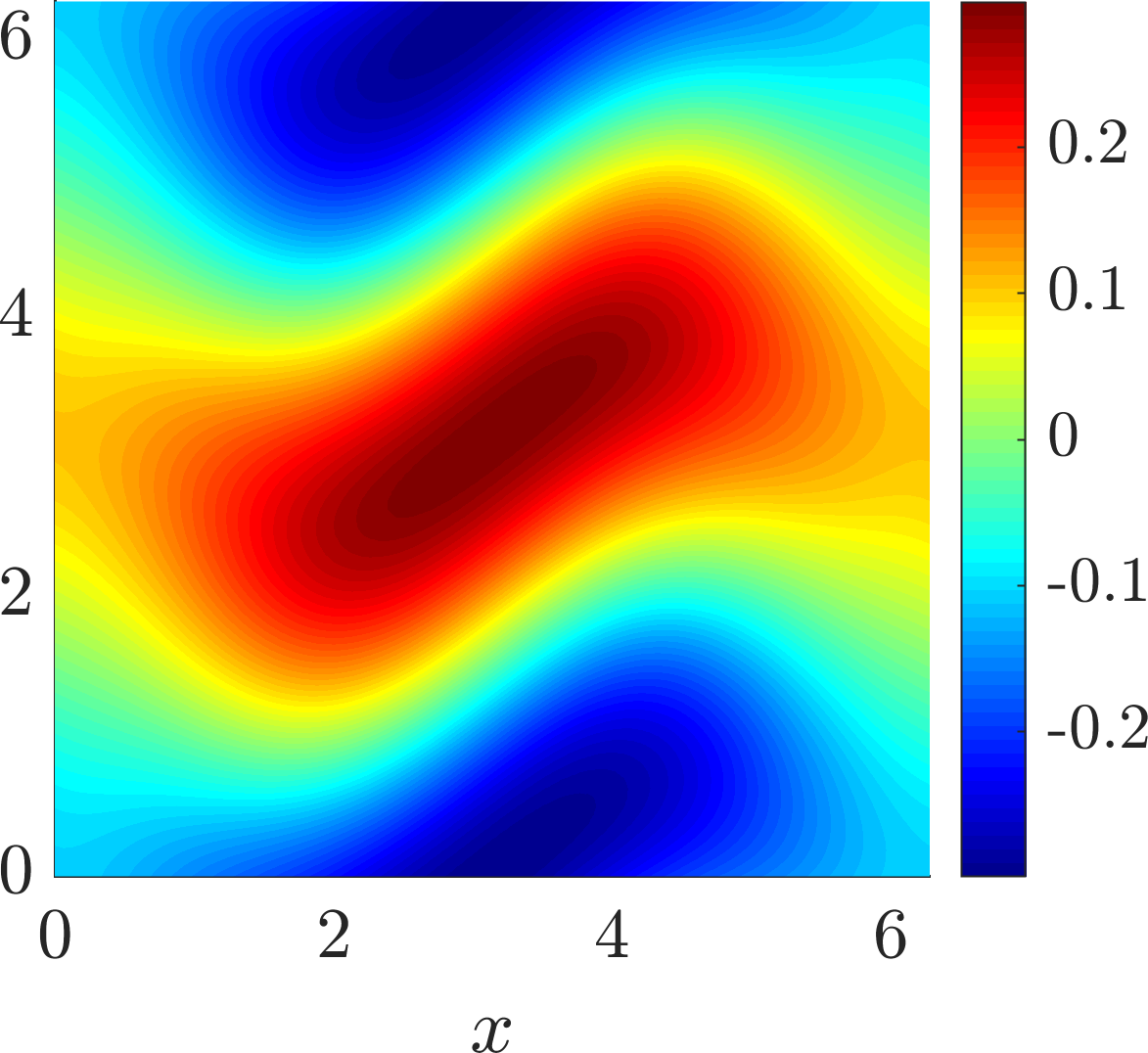}
  \includegraphics[width=0.24\textwidth]{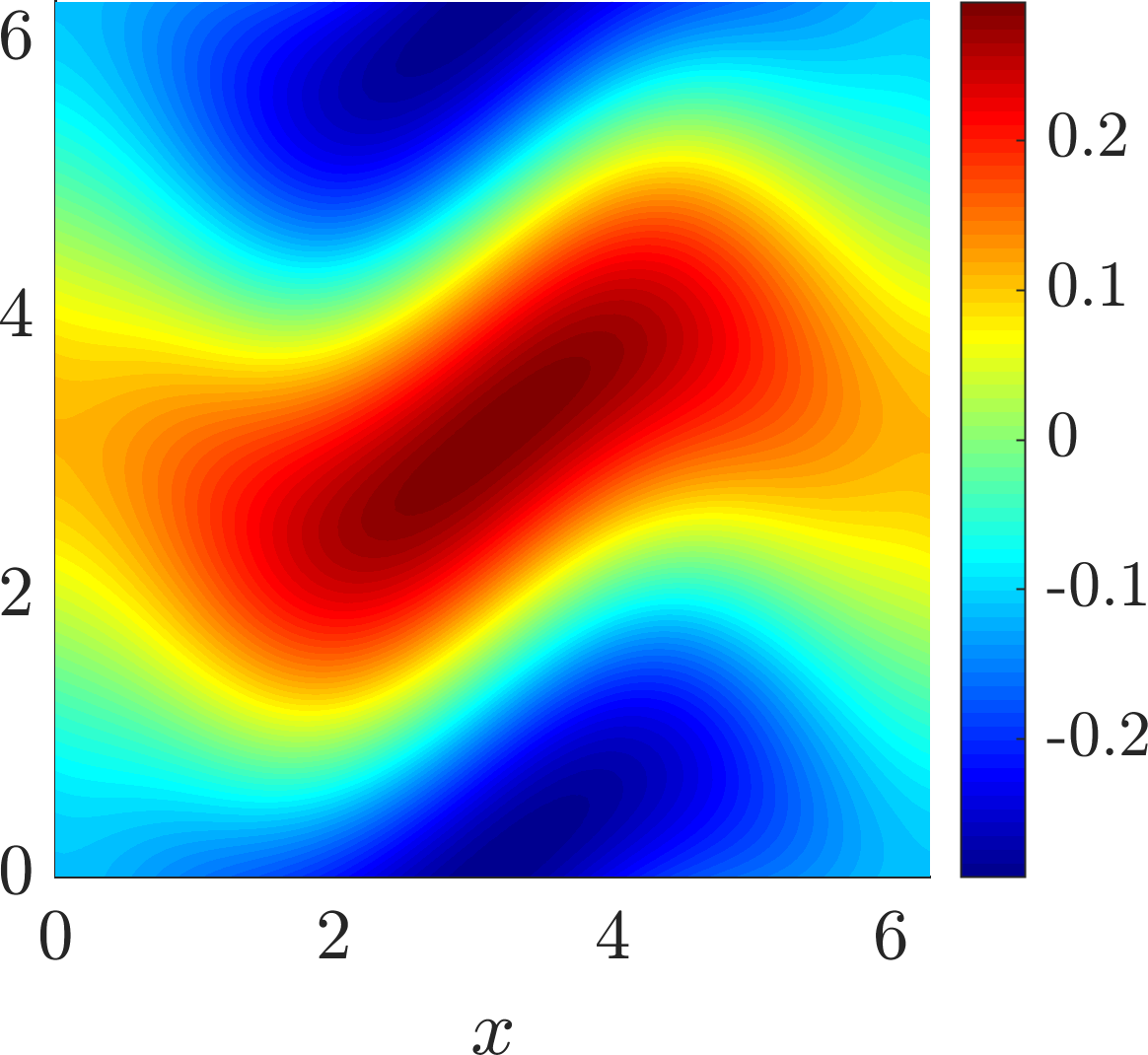}
  \caption{Standard map (left to right): $u_0$, $\dot{u}_0$, $u_0+\frac12\dot{u}_0$ and $u_\eps$, $a=0.98$, $\eps=0.5$.}
  \label{fig:stdmap_u0}
\end{figure}

Figure \ref{fig:stdmap_velocity} shows the velocity field for the level-set curves at $\eps=0$ which describes how the coherent set boundaries move in the fixed frame at $t_0$ as $a$ is varied from its nominal value $0.98$. We also show the level set at the value $c=0.1447$ which was selected from a line search of $c\in [0,\max_x u_0(x)]$ that minimises the dynamic Cheeger value in \eqref{cheeger} with $\Gamma_c=\{x\in \Omega: u_0(x)=c\}$.  As the parameter is increased from $a=0.98$ to a larger value, the boundary of the coherent set moves according to the velocity field $v_{\rm level}$ shown in Figure \ref{fig:stdmap_velocity}.  We further compare the level sets of $u_0$, $u_\eps$ and $u_0+\eps\dot u_0$, i.e. the prediction of $u_\eps$ by the linear Taylor approximation at $\eps=0.5$.  The predicted level set is indistinguishable from the true level set.  Note that we can obtain predictions for the perturbed level sets very cheaply by computing contours for $u_0+\eps \dot{u}_0$, $\eps\in [0,0.5]$.
\begin{figure}[htbp]
  \centering
  \includegraphics[width=0.25\textwidth]{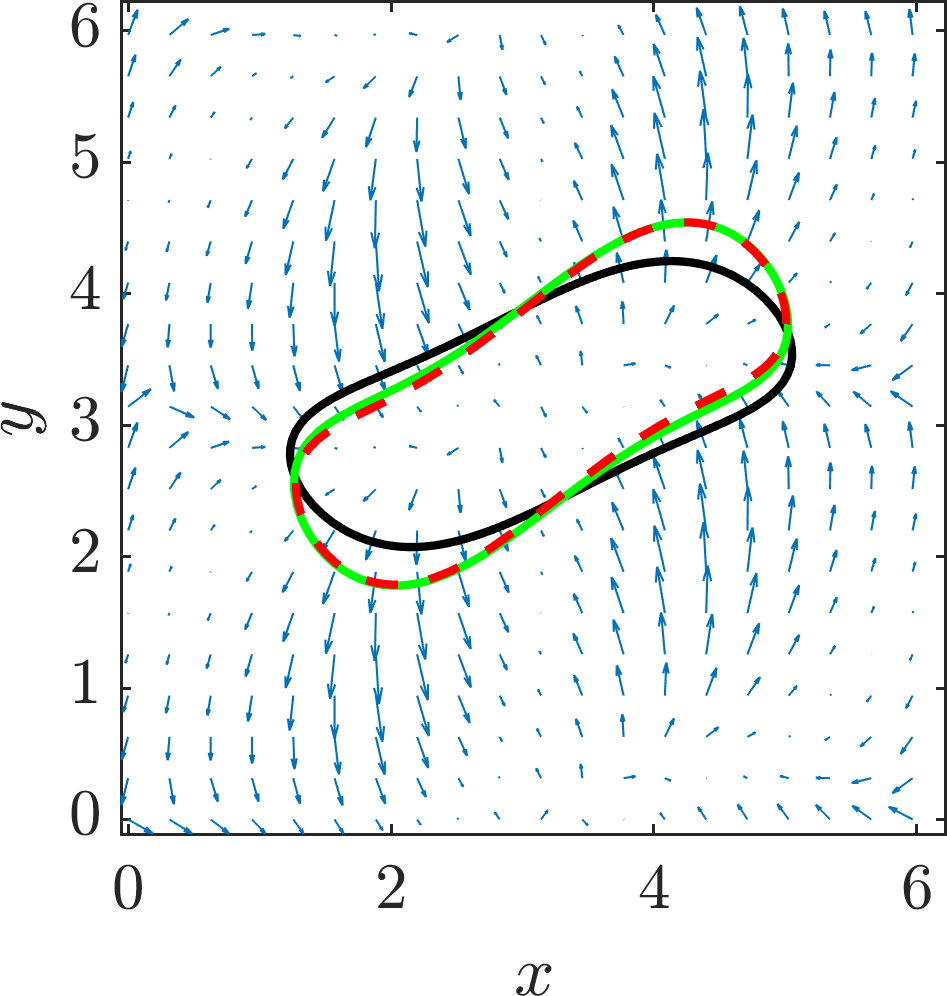}
  \caption{Standard map: velocity field $v_{\rm level}$ for the evolution of level sets (blue arrows); level sets of $u_\eps$ for $\eps=0$ (black) and $\eps=0.5$ (red dashed), and of $u_0+0.5\dot u_0$, i.e.\ the  prediction of $u_{0.5}$ by linear response (green).}
\label{fig:stdmap_velocity}
\end{figure}

\subsection{The transitory double gyre}
\label{exp:rotating_double_gyre}

In our second experiment, we consider the transitory flow from
\cite{mosovsky2011transport}.
This is a non-periodic time-variant Hamiltonian system with Hamiltonian $H=-\psi$, where $\psi$ is the stream function
\begin{align*}
\psi(x,y,t) &= (1-s(t))\psi_P(x,y) + s(t)\psi_F(x,y)\\
\psi_P(x,y) &= \sin(2\pi x)\sin(\pi y)\\
\psi_F(x,y) &= \sin(\pi x)\sin(2\pi y)
\end{align*}
and $s(t)$ is the transition function
\begin{align*}
  s(t) = \left\{\begin{array}{cl} 0 & \text{for } t<0,\\ t^2(3-2t) & \text{for }t\in [0,1],\\ 1 & \text{for }t>1.\end{array}\right.
\end{align*}
On the square $\Omega=[0,1]^2$, the vector field initially (at $t_0=0$) exhibits two gyres (if considered as a steady flow), with centers at $(\frac14,\frac12)$ and $(\frac34,\frac12)$. At the terminal time $t_1=1$, the vector field exhibits these gyres rotated by $\pi/2$ (again if considered as a steady flow).
In this experiment we view the flow time $t_1$ as the perturbation parameter, and analyse the effect on the coherent sets under the unsteady flow as the flow time is increased.

For the computations, we approximate the flow map and the solution of the variational equation by Matlab's \texttt{ode45} command, i.e.\ an explicit Runge Kutta-scheme with adaptive step size control. We construct a Delaunay triangulation of a regular $100\times 100$ grid of nodes on the square and use Gauss quadrature of degree 5 in order to approximate the integrals in the CG approach.

\begin{figure}[H]
  \centering
  \includegraphics[width=0.25\textwidth]{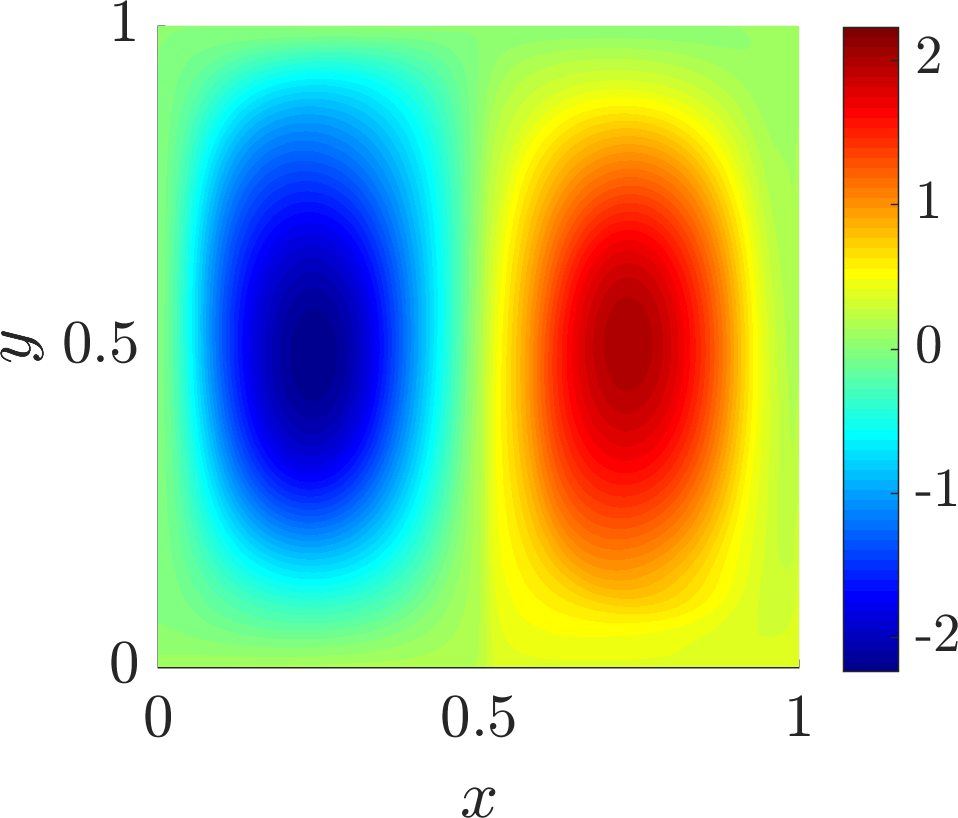}
  \includegraphics[width=0.23\textwidth]{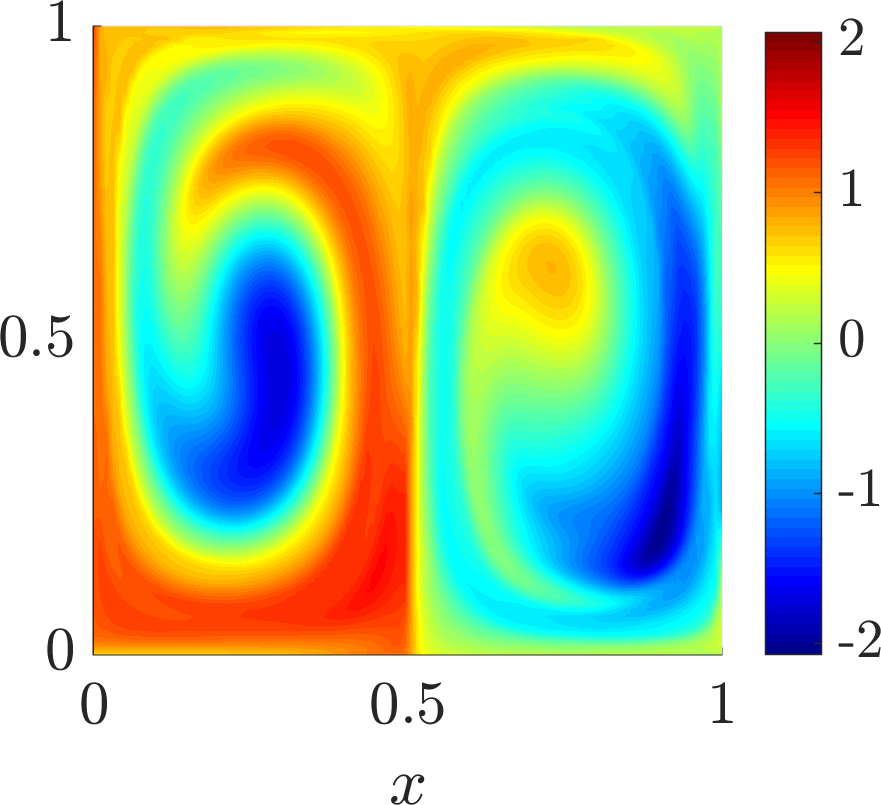}
  \includegraphics[width=0.23\textwidth]{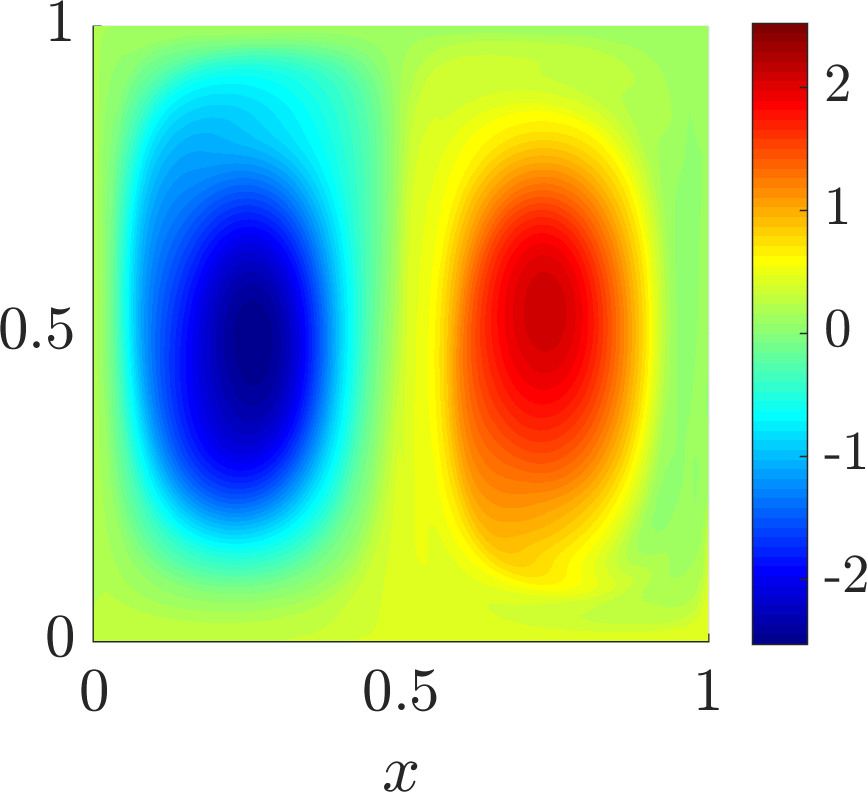}
  \includegraphics[width=0.23\textwidth]{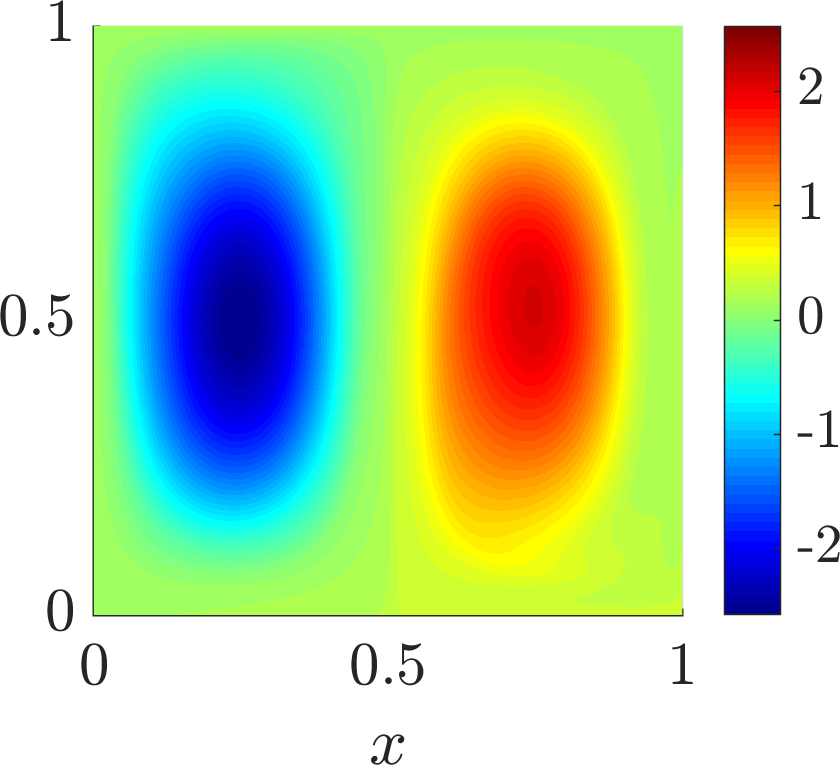}
  \includegraphics[width=0.25\textwidth]{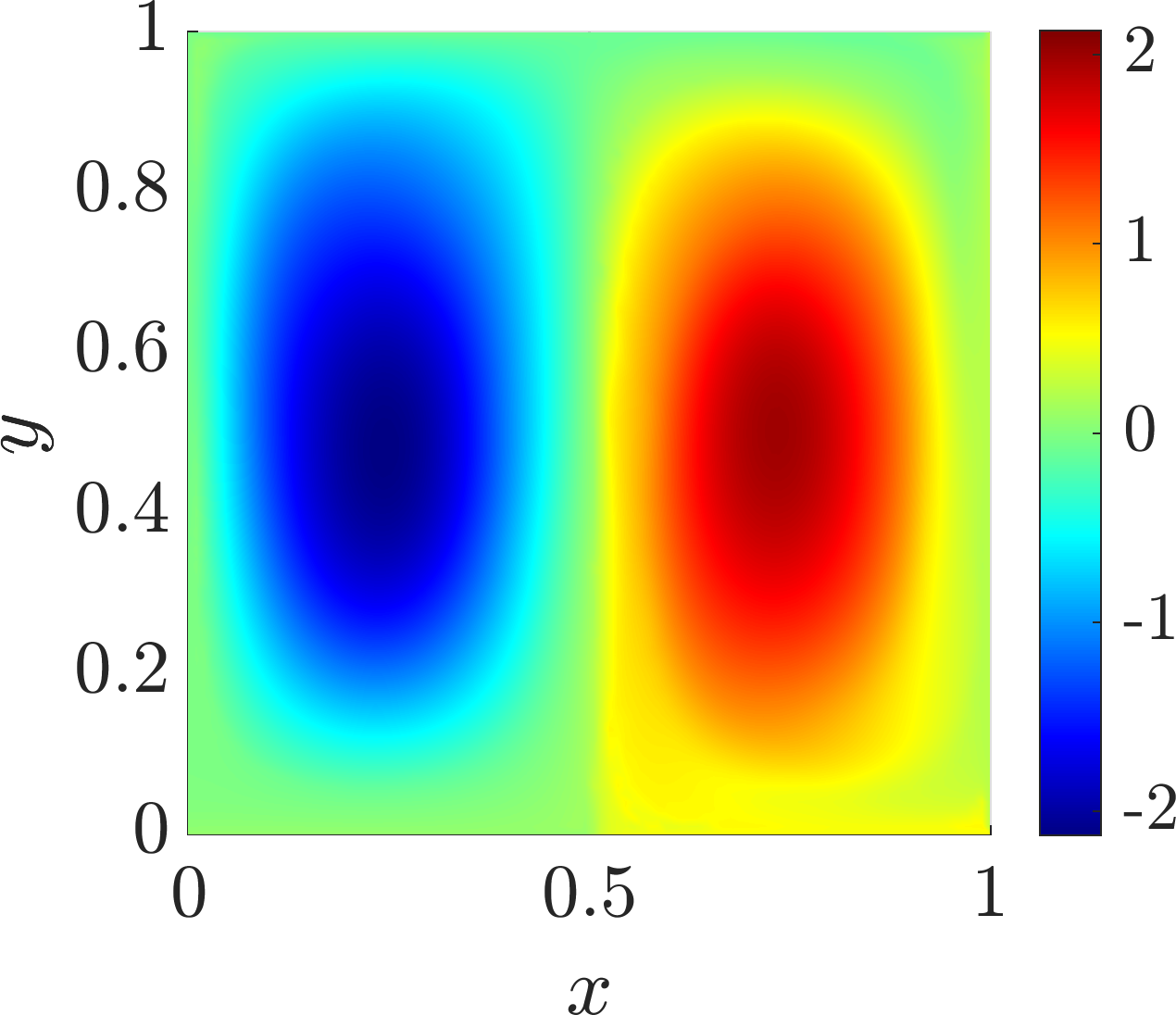}
  \includegraphics[width=0.23\textwidth]{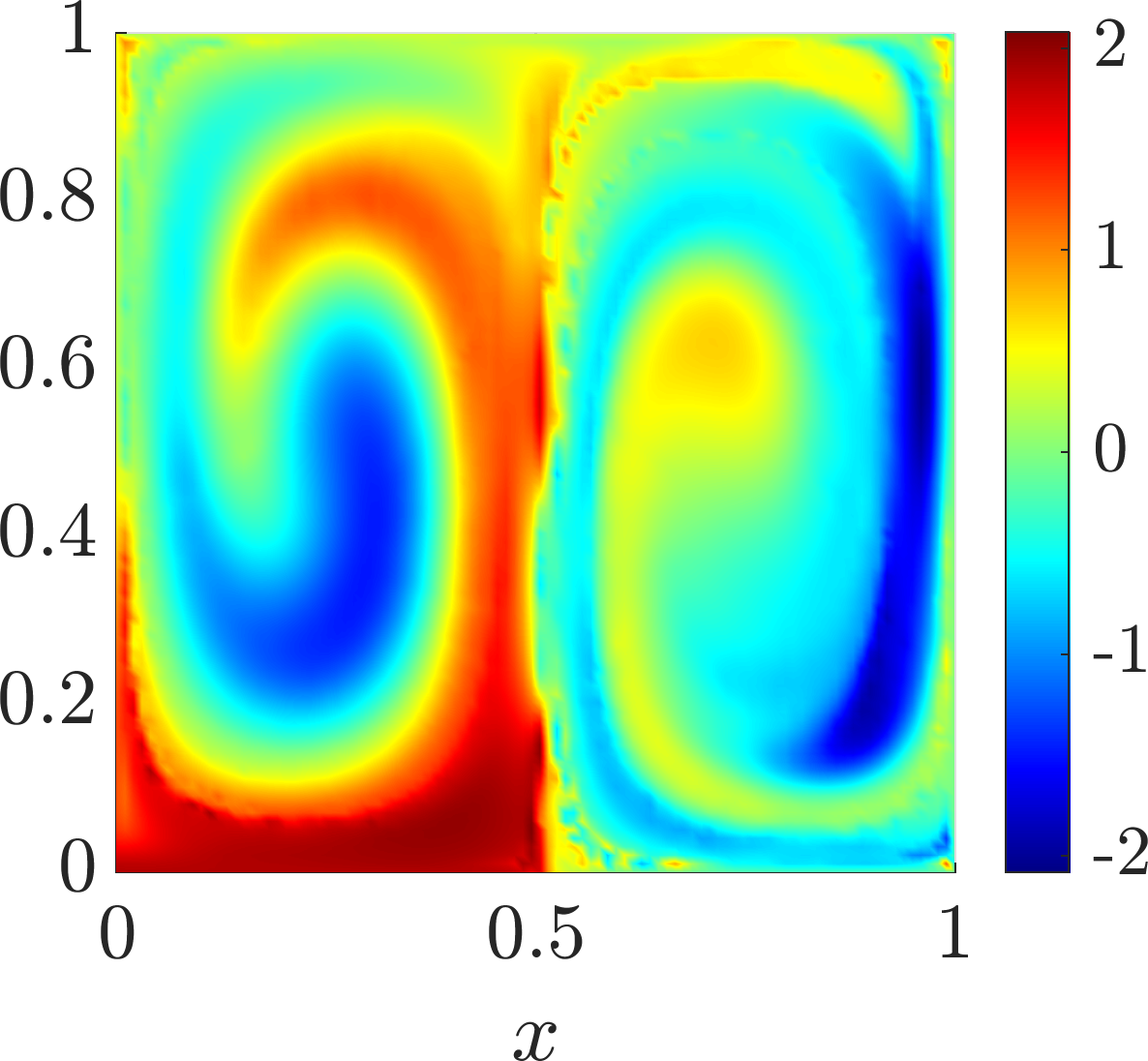}
  \includegraphics[width=0.23\textwidth]{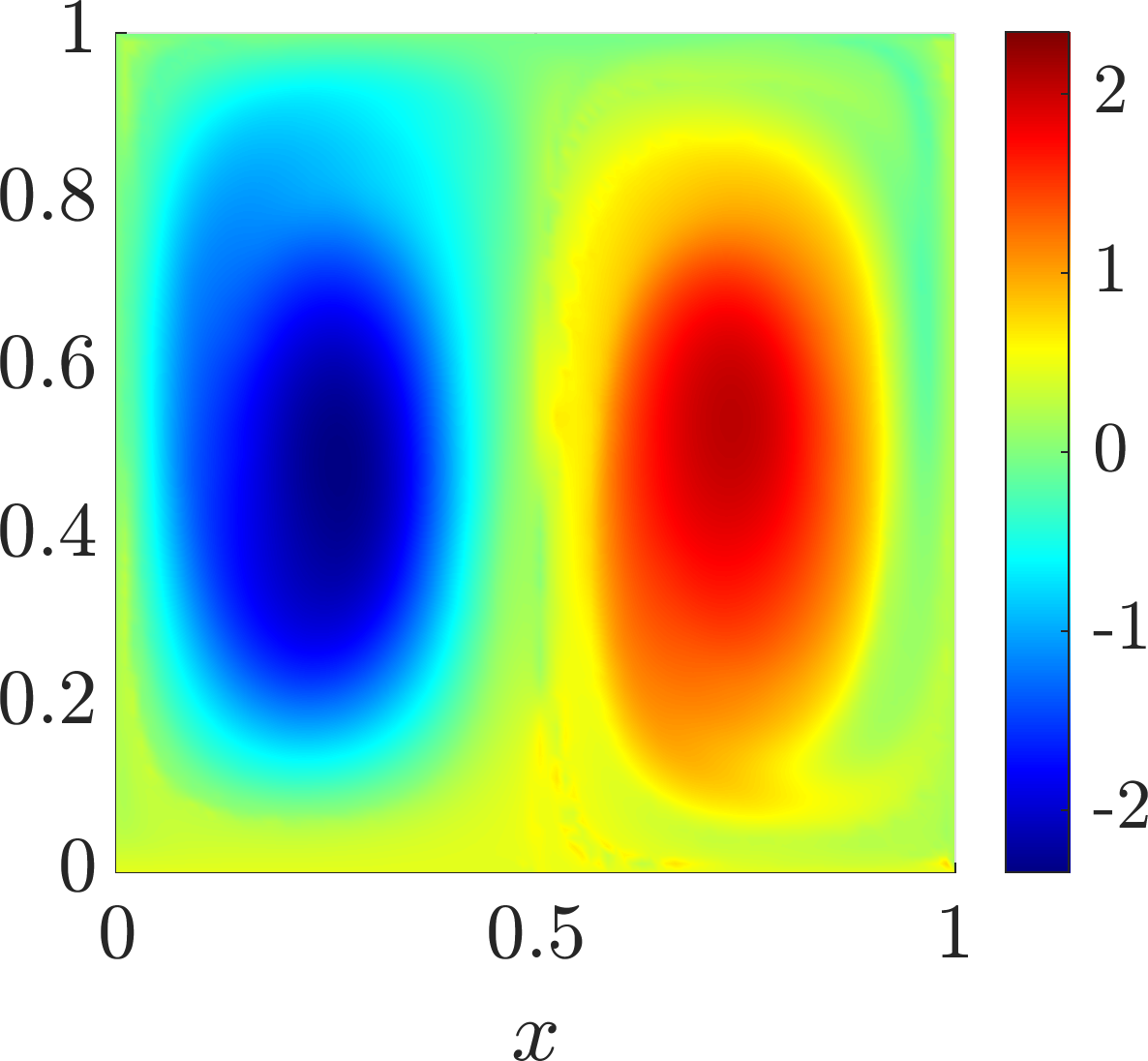}
  \includegraphics[width=0.23\textwidth]{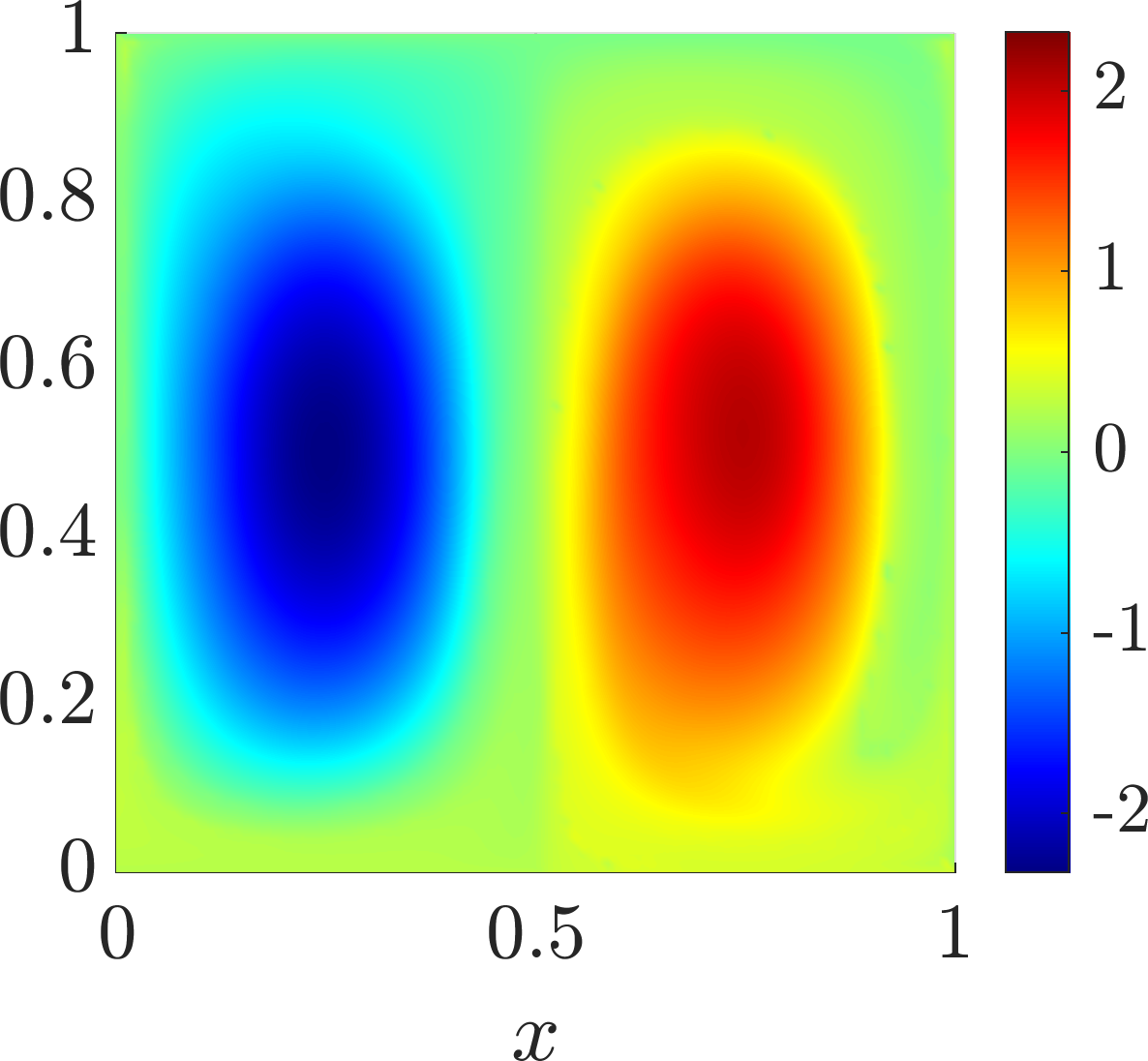}
  \caption{Transitory double gyre (left to right): $u_0$, $\dot{u}_0$, $u_0+0.2\dot{u}_0$ and $u_\eps$ (all plotted at time $t=0$). Top row: CG method, bottom row: TO method}
  \label{fig:rot_double_gyre_u0}
\end{figure}

In Figure \ref{fig:rot_double_gyre_u0} (left) we show the eigenvector $u_0$ at the second eigenvalue $\lambda_0=-50.4$ (all numbers from the TO approach) of the dynamic Laplacian $\Delta_0^D$ for $t_1=0.6$ (corresponding to $\eps=0$).
This figure is consistent with earlier experiments using transfer operator methods \cite[Figure 7(a)]{FrPa14} and Ulam approximation of the dynamic Laplacian \cite[Figure 8 (left)]{froyland2015dynamic}.
The eigenvector identifies two coherent sets displayed in red and blue.
The two plots in the center of Figure \ref{fig:rot_double_gyre_u0} display $\dot{u}_0$ (center left) and $u_0+\eps \dot{u}_0$ (center right) for $\eps=0.2$, corresponding to a flow time $t_1+\eps=0.6+0.2=0.8$.  Even for this quite large extrapolation value, we obtain a result that is very similar to the exact eigenvector $u_\eps$ at the second eigenvalue  $\lambda_\eps=-61.6$ shown in the very right column in Figure \ref{fig:rot_double_gyre_u0}. The corresponding relative $L^2$-error is $\|u_\eps-(u_0+\eps\dot u_0)\|/\|u_0\|\approx 0.1$. We further obtain  $\dot\lambda_0 = -50.4$, which results in the estimate $\lambda_0+\eps\dot\lambda_0 = -60.5$ for $\lambda_\eps$, i.e.\ using $\dot\lambda_0$ we get an estimate of $\lambda_\eps$ with a relative error of $2\%$.
As expected, lengthening the flow time leads to a loss of coherence, indicated by the negative sign of $\dot{\lambda_0}$, and approximately quantified by the magnitude of $\dot{\lambda_0}$.
\begin{figure}[H]
  \centering
  \includegraphics[width=0.3\textwidth]{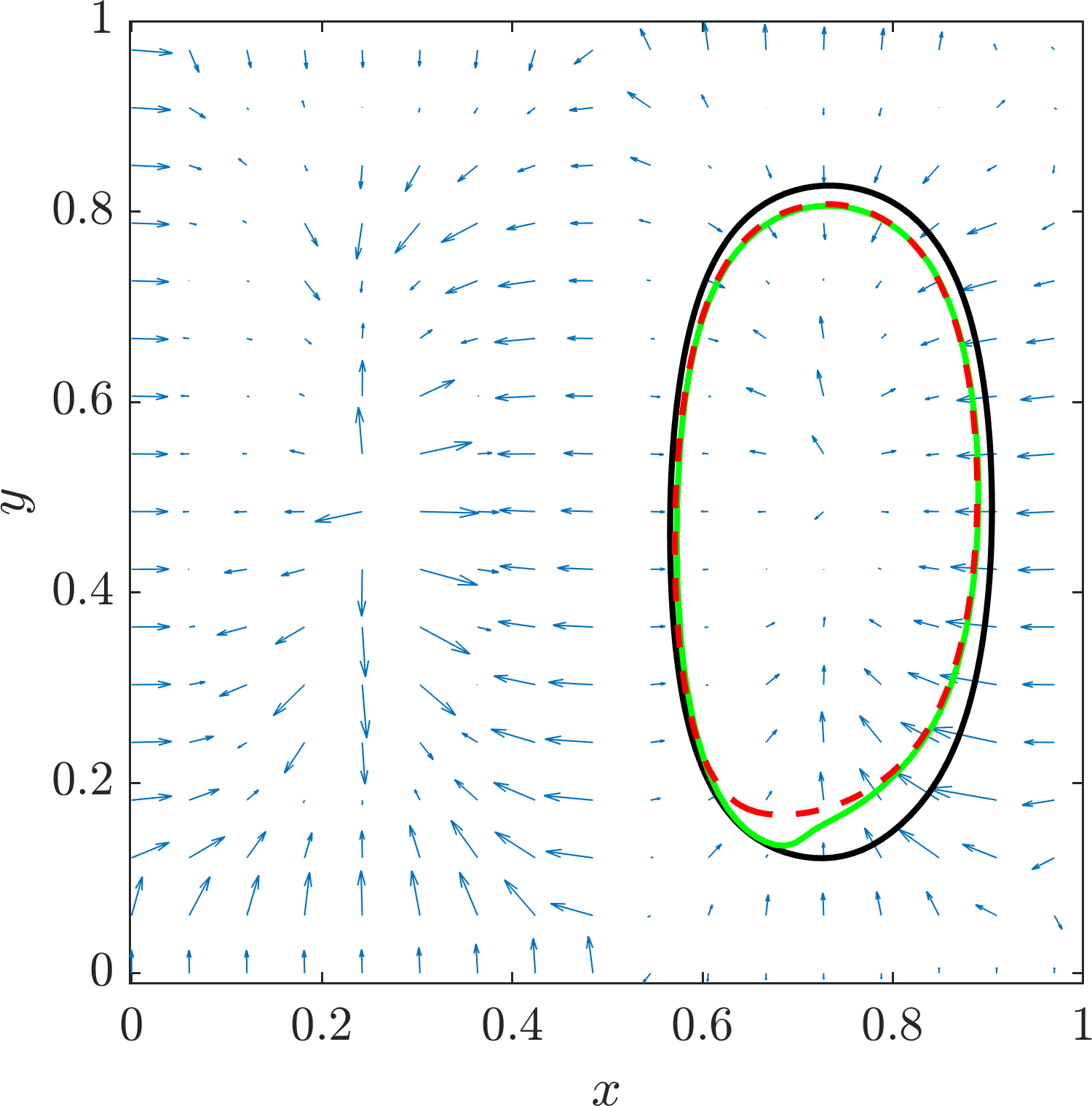}
 \caption{Transitory double gyre: velocity field $v_{\rm level}$ for the level-set evolution (blue arrows); level sets of $u_\eps$ for $\eps=0$ (black)  and $\eps=0.2$ (red) and of $u_0+0.2\dot u_0$, i.e.\ the  prediction of $u_{0.2}$ by linear response (green).}
  \label{fig:rot_double_gyre_velocity}
\end{figure}

Note that while agreeing qualitatively, there are some visible differences  in $\dot u_0$ as computed by the CG vs.\ the TO approach (top row vs.\ bottom row of Figure~\ref{fig:rot_double_gyre_u0}).  The TO method, however, is using considerably less information on the dynamics than the CG approach: the only dynamical data used in TO are the images of the $10^4$ grid nodes at the final time and their derivatives with respect to $\eps$.  In the CG approach, we have to time-integrate the variational equation for each quadrature node in each element of the triangulation which here amounts to $\approx10^5$ time integrations.  If one chooses a correspondingly finer grid for TO -- so that the numerical effort is comparable to CG -- the figures and the prediction on how the coherent set changes become visually indistinguishable.

Using either of the latter approaches, we identify coherent sets in the time frame at $t_0$ by selecting the value $c=0.8412$ from a line search of $c\in [0,\max_x u_0(x)]$ that minimises the dynamic Cheeger value in \eqref{cheeger} with $\Gamma_c=\{x\in \Omega: u_0(x)=c\}$.
Figure \ref{fig:rot_double_gyre_velocity} shows the velocity field $v_{\rm level}$ of the level-set curves at $\eps=0$, which describes how the coherent set boundaries move in the fixed frame at $t_0$ as $t_1$ is extended beyond $t_1=0.6$.
Our linear extrapolation again appears to be accurate, even for a macroscopic extension of the flow time, as the change in the level-set contour from $\eps=0$ (solid line) to $\eps=0.2$ is consistent with the prediction by the velocity field.

\section*{Acknowledgements}

FA is supported by a UNSW PhD scholarship, GF is partially supported by an ARC Discovery Project. OJ acknowledges support by the DFG within the Priority Programme SPP 1881 Turbulent Superstructures.
A visit of FA and GF to the TUM Department of Mathematics was supported by a Universities Australia / DAAD Joint Research Co-operation Scheme, and they thank TUM for hospitality. A visit of OJ to the UNSW School of Mathematics and Statistics was also supported by this scheme and he thanks UNSW for hospitality. 

\bibliographystyle{plain}

\bibliography{lin_resp_dyn}
\end{document}